\documentclass[a4paper]{amsart}
\usepackage[margin=1in]{geometry}
\usepackage{multicol}
\usepackage{fancyhdr}
\usepackage{stmaryrd} 
\usepackage{nicefrac, xfrac} 
\usepackage{amssymb} 
\usepackage{bbold} 
\usepackage{float}
\usepackage{amsthm}
\usepackage{mathtools} 
\usepackage{thmtools} 
\usepackage{comment}

\usepackage{tikz}
\usetikzlibrary{calc}
\usetikzlibrary{arrows}
\usetikzlibrary{decorations.markings}
\usetikzlibrary{decorations.pathreplacing}
\usetikzlibrary{arrows,shapes,positioning}
\usetikzlibrary{shapes.geometric}
\usetikzlibrary{patterns}
\tikzstyle directed=[postaction={decorate,decoration={markings,
    mark=at position #1 with {\arrow{>}}}}]
\tikzstyle rdirected=[postaction={decorate,decoration={markings,
    mark=at position #1 with {\arrow{<}}}}]
\tikzset{anchorbase/.style={baseline={([yshift=-0.5ex]current bounding box.center)}}}

\tikzset{
    partial ellipse/.style args={#1:#2:#3}{
        insert path={+ (#1:#3) arc (#1:#2:#3)}
    }
  }

\usepackage{tikz-cd}

\usepackage{hyperref}
\usepackage[capitalize]{cleveref}
\crefname{figure}{Figure}{Figures}

\newcommand{\Q}{\mathbb{Q}}
\newcommand{\R}{\mathbb{R}}
\newcommand{\C}{\mathbb{C}}
\newcommand{\Z}{\mathbb{Z}}
\newcommand{\N}{\mathbb{N}}

\DeclareMathOperator{\End}{End}
\DeclareMathOperator{\END}{END}

\DeclareMathOperator{\PSL}{PSL}
\DeclareMathOperator{\PGL}{PGL}

\let\tilde=\widetilde

\newcommand{\qn}[1]{\lbrace #1 \rbrace}
\newcommand{\tqn}[1]{\tilde{\lbrace #1 \rbrace}}
\newcommand{\qbin}[2]{\begin{Bmatrix} #2 \\ #1\end{Bmatrix}}
\newcommand{\xWeb}{x{\bf \mathrm{Web}}}
\newcommand{\nWeb}{n{\bf \mathrm{Web}}}
\newcommand{\iWeb}{\infty{\bf \mathrm{Web}}}

\newcommand{\Schur}{\mathcal{S}}

\theoremstyle{plain}
\newtheorem{theorem}{Theorem}[section]
\newtheorem{lemma}[theorem]{Lemma}
\newtheorem{corollary}[theorem]{Corollary}
\newtheorem{remark}[theorem]{Remark}
\newtheorem{notation}[theorem]{Notation}
\newtheorem{definition}[theorem]{Definition}
\newtheorem{example}[theorem]{Example}

\newtheorem{proposition}[theorem]{Proposition}
\newtheorem{question}[theorem]{Question}

\title{$q$-rationals, link invariants and webs}
\author{Perrine Jouteur}
\address{Universit\'e de Reims Champagne Ardenne,
Laboratoire de Math\'e\-ma\-tiques, CNRS UMR9008,
Moulin de la Housse - BP 1039,
51687 Reims cedex 2,
France}
\email{perrine.jouteur@univ-reims.fr}

\author{Hoel Queffelec}
\address{France-Australia Mathematical Sciences and Interactions ANU - CNRS International Research Laboratory \\ Australia}
\email{hoel.queffelec@cnrs.fr}

\date{}

\begin{document}

\maketitle

\begin{abstract} We investigate the role of $q$-rationals in the context of link invariants. This leads us to introduce and study web categories that $q$-deform Deligne's categories. 
\end{abstract}

\section{Introduction}

The most famous polynomial knot and link invariants, namely the Alexander polynomial~\cite{Alexander} and the Jones polynomial~\cite{Jones}, can be united by the (2-variable) HOMFLY-PT polynomial~\cite{HOMFLY,PT}. These invariants can be computed from a diagram of the knot, via so-called skein relations. A key relation sets the value for the unknot. For the Reshetikhin-Turaev $\mathfrak{sl}_n$-invariant~\cite{RT}, of which the Jones  polynomial is the $n=2$ incarnation, one gets the value $[n]=\frac{q^n-q^{-n}}{q-q^{-1}}$ (to be read \emph{quantum n}), which is the graded dimension of the vector representation of $U_q(\mathfrak{sl}_n)$. This generalizes for the HOMFLY-PT polynomial in a 2-variable rational fraction $\frac{a-a^{-1}}{q-q^{-1}}$, and we safely ignore the Alexander polynomial from this discussion as it requires further renormalizations and modified traces \cite{Viro,SarAlexander}.

It is a very tempting project to try to generalize these invariants to non-integral values of $n$. Part of it has been done by Vogel~\cite{Vogel}, by considering the value of the unknot as a parameter. But then it is unclear what admissible values can be used for this parameter that would bear a special meaning.

As implicitly conveyed in the previous paragraphs, the categories of representations of quantum groups $U_q(\mathfrak{gl}_n)$ (and super-groups $U_q(\mathfrak{gl}_{r|s})$) play a central role in several definitions for the knot invariants, via the diagrammatics of web categories. It is most natural to try replacing the discrete parameter $n$ by a continous one. The most naive version would simply consist in copying the definition for the quantum dimension, replacing $[n]=\frac{q^n-q^{-n}}{q-q^{-1}}$ by a yet-to-interpret $[x]=\frac{q^x-q^{-x}}{q-q^{-1}}$. This simple definition of $[x]$, which is in line with Brundan's work~\cite{Brundan}, introduces non integer powers of $q$, which are difficult to work with (at least at a combinatorial level) and do not specialize to holomorphic complex functions if one wishes to consider $q$ as a complex variable. 

In that regard, the recent definition of the notion of $q$-rationals by Morier-Genoud and Ovsienko \cite{MGO-2020} carries new hope to define meaningful knot and link invariants interpolating the Reshetikhin-Turaev polynomials. Indeed, they assign to each $x\in \Q$ a rational fraction that recovers (up to some rescalings) the quantum integers if $x\in \Z$, and generalizes it in a relevant way from the algebraic and combinatorial perspective. Furthermore, one can use a limit argument to also $q$-deform any $x\in \R$ \cite{MGOr}. These $q$-real numbers were already used in the context of link invariants, in particular to compute the Kauffman bracket polynomial of rational tangles \cite{sikora_tangle_2024,Kogiso_Wakui}.

In this paper, we show that the categories of webs used to define the HOMFLY-PT polynomial admit natural specializations to categories that are linear over a quadratic extension of $\Q(q)$, and that assign to the stabilized unknot the $q$-rational $\qn{x}$. We give a presentation for these categories in Proposition~\ref{prop:xWeb_presentation}, and extract a link invariant from them, that is naturally a specialization of the HOMFLY-PT polynomial.

These web categories interpolate between the $\mathfrak{gl}_n$-web categories, and provide a quantum analog to Deligne's categories $\mathrm{Rep}(\mathfrak{gl}_t)$. We believe that these categories should play an interesting role in representation theory. In Section~\ref{sec:Deligne}, we argue that the definition of the categories $\xWeb$ is essentially dictated by a few natural conditions. Then we briefly investigate in Section~\ref{sec:derivation} some interesting properties of these families of knot invariants related to continuity and derivability.

We end up with an investigation of an alternative construction that uses so-called \emph{left} $q$-rationals~\cite{BBL}.

A perspective that we find quite exciting and we would like to finally mention is the question of categorifying the invariant: might it be that a categorified $x$-invariant brings something genuinely new?

\vspace{.5cm}

\noindent {\bf Code:} In the early stages of this project, before we realized that the invariant is a specialization of the HOMFLY-PT polynomial, we developed code to compute it. The fact that it systematically failed at distinguishing knots with the same HOMFLY-PT polynomial led us to the results in this paper. The code is available on a public repository\footnote{See \url{https://plmlab.math.cnrs.fr/hqueffel/ratiknot_public/}}. The backend functions providing computations with webs might be interesting in other contexts.

\vspace{.5cm}

\noindent {\bf Acknowledgements:} We wish to thank Asilata Bapat, Ben Elias, Mikhail Khovanov, Joan Licata, Tony Licata, Adam Sikora and Daniel Tubbenhauer for interesting discussions. The work of H.Q. was partially funded by the Agence Nationale de la Recherche, under the projects \emph{CaGeT} (ANR-25-CE40-4162) and \emph{NASQI3D} (ANR-24-CE40-7252).

\section{$q$-real numbers} \label{sec:qrat}

We review notations and definitions of $q$-deformed rational numbers, for an indeterminate $q$. Recall first the $q$-deformed integers and binomial coefficients (normalized to have actual polynomials in $q$). Notice that in order to stay closer to the literature on knot invariants and web categories, the variable $q$ in~\cite{MGO-2020} and subsequent references is replaced by $q^2$. We thus define:

\[
\qn{n} := \frac{q^{2n}-1}{q^2-1}, \text{ and } \qbin{k}{n} := \frac{\qn{n}\qn{n-1}\cdots \qn{n-k+1}}{\qn{k}\qn{k-1}\cdots \qn{1}}.
\]

\begin{definition}[\cite{MGO-2020}]
Let $x \in \Q$, and let $[a_1,a_2,\cdots,a_n]$ be its even continued fraction representation. The $q$-analogue of $x$ is 
\begin{equation}
    \qn{x} = \qn{a_1} + \dfrac{q^{2a_1}}{\{a_2\}_{q^{-2}} + \dfrac{q^{-2a_2}}{ \qn{a_3} + \dfrac{q^{2a_3}}{\ddots + \dfrac{q^{2a_{n-1}}}{\{a_n\}_{q^{-2}}}}}}.
\end{equation}
\end{definition}

This $q$-deformation is designed to be equivariant for a $q$-deformed action of $\PGL_2(\Z)$ on rational numbers \cite{Leclere_modular,Jouteur_sym}. In particular, $q$-rational numbers satisfy
\begin{equation}
    \label{shift_equivariance}
    \qn{x+1} = q^2\qn{x} + 1 \text{ and } \left\lbrace\frac{1}{x}\right\rbrace = \frac{1}{\qn{x}_{q^{-1}}}.
\end{equation}

\begin{notation} \label{not:delta_x}
For a rational number $x$, let $\delta_x := \qn{x} - \qn{x-1}$. Note that for $x=n\in \Z$, this reduces to $\delta_n = q^{2(n-1)}$.
\end{notation}

We also have a generalized version of $q$-binomial coefficients.

\begin{definition}
Let $x\in \Q$ and let $k \in \N$. Define
\begin{equation}
    \label{qbinomials}
    \qbin{k}{x} := \frac{\qn{x}\qn{x-1}\cdots \qn{x-k+1}}{\qn{k}\qn{k-1}\cdots \qn{1}}.
\end{equation}
\end{definition}

\begin{definition}[\cite{MGOr}]
Let $x\in \R\setminus \Q$, and let $(x_n)_n$ be a sequence of rational numbers whose limit is $x$. The $q$-deformation of $x$ is defined to be the $q$-adic limit of the sequence $(\qn{x_n})_n$.\\
\noindent In other words, for each $n\in \N$, let $\sum_k c_k^{(n)}q^k$ be the Taylor expansion at $q=0$ of $\qn{x_n}$. Then 
\begin{equation}
    \label{qirrationals}
    \qn{x} := \sum_{k}c_k q^k \text{ where } c_k = \lim_{n\mapsto +\infty} c_k^{(n)}.
\end{equation}

\end{definition}

\begin{remark}
For all $x\in \R$, we have $\qn{x} \underset{q\mapsto 1}{\rightarrow} x$. See \cite{Etingof2025,MGOr,BBL} for more details.
\end{remark}

\section{Links and web categories} \label{sec:webs}

\subsection{Knot and links}

A link is a smooth embedding of a finite collection of disjoint circles in $\R^3$, up to isotopy. It is said to be framed if it comes with a (homotopy class of) non-singular normal vector field. One can represent framed links by pictures called link diagrams, which are generic projections of links on the plane, keeping track of the over or under crossings. See \cite{KRT} for more details about links. Two link diagrams will represent the same framed link if and only if they differ by a finite number of local moves called the Reidemeister moves. 

\begin{definition}[framed Reidemeister moves]
\label{def:reidemeister}
\[
\begin{tikzpicture}[anchorbase]
\begin{scope}
\draw[semithick] (0,0) -- (0,2);
\node at(0.5,1) {$\overset{R'_1}{\longleftrightarrow}$};
\draw[semithick] (1,0) -- (1,0.3);
\draw[semithick] (1.08,0.5) to[out=70,in=180] (1.35,0.75) to[out=0,in=90] (1.6,0.5) to[out=-90,in=0] (1.3,0.2) to[out=180,in=-90] (1,0.5);
\draw[semithick] (1,0.5) -- (1,1.3);
\draw[semithick] (1,1.3) to[out=90,in=180] (1.3,1.8) to[out=0,in=90] (1.6,1.5) to[out=-90,in=0] (1.3,1.2) to[out=180,in=-80] (1.1,1.4);
\draw [semithick] (1,1.7) -- (1,2);
\end{scope}
\begin{scope}[shift={(3,0)}]
\draw[semithick] (-0.2,0) -- (-0.2,2);
\draw[semithick] (0,0) -- (0,2);
\node at(0.5,1) {$\overset{R_2}{\longleftrightarrow}$};
\draw[semithick] (1,0) -- (1,0.3) to[out=90,in=-90] (1.3,1) to[out=90,in=-90] (1,1.7) -- (1,2) ;
\draw[semithick] (1.3,0) -- (1.3,0.3) to[out=90,in=-60] (1.2,0.6);
\draw[semithick] (1.1,0.7) to[out=120,in=-90] (1,1) to[out=90,in=-120] (1.1,1.3);
\draw[semithick] (1.2,1.4) to[out=60,in=-90] (1.3,1.7) -- (1.3,2); 
\end{scope}
\begin{scope}[shift={(6,0)}]
\draw[semithick] (0,0) -- (0.6,2);
\draw[semithick] (0,2) -- (0.25,1.1);
\draw[semithick] (0.3,2) -- (0.4,1.55);
\draw[semithick] (0.3,0) -- (0.6,0.7);
\draw[semithick] (0.6,0.7) to[out=80,in=-60] (0.5,1.4);
\draw[semithick] (0.32,0.88) -- (0.45,0.45);
\draw[semithick] (0.5,0.35)-- (0.6,0);
\node at(1.3,1) {$\overset{R_3}{\longleftrightarrow}$};
\draw[semithick] (2,0) -- (2.6,2);
\draw[semithick] (2,2) -- (2.12,1.61);
\draw[semithick] (2.2,1.45) -- (2.28,1.15);
\draw[semithick] (2.35,1) -- (2.6,0);
\draw[semithick] (2.3,2) -- (2,1.1);
\draw[semithick] (2,1.1) to[out=-100,in=110] (2.12,0.6);
\draw[semithick] (2.2,0.5) -- (2.3,0);
\end{scope}
\end{tikzpicture}
\]
\end{definition}

The HOMFLY-PT polynomial is a link invariant that will play a central role in this paper. One possible definition uses the language of webs (see for example \cite{QS_note,QS_Schur,Brundan} and references therein), which we will introduce now. Webs originate from the work of Kuperberg~\cite{Kup}, and their modern study stems from work of Cautis-Kamnitzer-Morrison~\cite{CKM}. Web categories are diagrammatic categories that originate from the study of intertwiners for representations of quantum groups, which have been central in quantum topology since the work of Reshetikhin and Turaev~\cite{RT}.

Classical web categories thus depend on the choice of a quantum group. In the most classical case, one chooses $\mathfrak{gl}_n$ as underlying Lie algebra, and the category depends on the parameter $n$. One of the main goals of our paper is to give a motivated definition for web categories for non-integral parameters $x\in \Q$. (Then one could take limits and extend this to $x\in \R$.) We will first review classical definitions and highlight their associated link invariants, before we introduce new ones in Section~\ref{sec:interpolating}.

\subsection{Upward webs}

We start by introducing upwards webs, which will be the building blocks of all later web categories.

\begin{definition} A purely upward web is an oriented trivalent graph with edges labeled in $\N^*$, embedded in $\R^2$ with an upward orientation at any generic height. Moreover, it must satisfy a flow condition at each vertex :
\[
\begin{tikzpicture}[anchorbase,scale=.8]
\draw [semithick] (0,0) -- (0,1);
\node at (0,-.3) {$k+l$};
\draw [semithick,->] (0,1) -- (-1,2);
\node at (-1,2.3) {$k$};
\draw [semithick,->] (0,1) -- (1,2);
\node at (1,2.3) {$l$};
\end{tikzpicture}
\qquad
\begin{tikzpicture}[anchorbase,scale=.8]
\draw [semithick] (-1,0) -- (0,1);
\node at (-1,-.3) {$k$};
\draw [semithick] (1,0) -- (0,1);
\node at (1,-.3) {$l$};
\draw [semithick,->] (0,1) -- (0,2);
\node at (0,2.3) {$k+l$};
\end{tikzpicture}
\]

Webs are subject to isotopy (respecting the height) and the following list of relations\footnote{In the relations, it might happen that edges labeled $0$ appear. The convention is that these can be erased. On the other hand, if a web with a negative label appears, it is set to zero.}:
\begin{equation}\label{eq:triwebs}
\begin{tikzpicture}[anchorbase,  decoration={markings,mark=at position 0.5 with {\arrow{>}}}]
    \draw [semithick, postaction={decorate}] (0,0) to [out=90,in=-150] (.5,1);
    \draw [semithick,postaction={decorate}] (1,0) to [out=90,in=-30] (.5,1);
    \draw [semithick,postaction={decorate}] (.5,1) to [out=90,in=-150] (1,1.5);
    \draw [semithick,postaction={decorate}] (2,0) to [out=90,in=-30] (1,1.5);
    \draw [semithick,->] (1,1.5) -- (1,2);
    \node at (0,-.3) {\small $k$};
    \node at (1,-.3) {\small $l$};
    \node at (2,-.3) {\small $r$};
\end{tikzpicture}
\quad = \quad
\begin{tikzpicture}[anchorbase,  decoration={markings,mark=at position 0.5 with {\arrow{>}}}]
    \draw [semithick, postaction={decorate}] (2,0) to [out=90,in=-30] (1.5,1);
    \draw [semithick,postaction={decorate}] (1,0) to [out=90,in=-150] (1.5,1);
    \draw [semithick,postaction={decorate}] (1.5,1) to [out=90,in=-30] (1,1.5);
    \draw [semithick,postaction={decorate}] (0,0) to [out=90,in=-150] (1,1.5);
    \draw [semithick,->] (1,1.5) -- (1,2);
    \node at (0,-.3) {\small $k$};
    \node at (1,-.3) {\small $l$};
    \node at (2,-.3) {\small $r$};
\end{tikzpicture}
\quad, \quad
\begin{tikzpicture}[anchorbase,  decoration={markings,mark=at position 0.5 with {\arrow{>}}}]
    \draw [semithick, ->] (.5,1) to [out=150,in=-90] (0,2);
    \draw [semithick,->] (.5,1) to [out=30,in=-90] (1,2);
    \draw [semithick,->] (1,.5) to [out=150,in=-90] (.5,1);
    \draw [semithick,->] (1,.5) to [out=30,in=-90] (2,2);
    \draw [semithick,postaction={decorate}] (1,0) -- (1,.5);
    \node at (0,2.3) {\small $k$};
    \node at (1,2.3) {\small $l$};
    \node at (2,2.3) {\small $r$};
\end{tikzpicture}
\quad = \quad
\begin{tikzpicture}[anchorbase,  decoration={markings,mark=at position 0.5 with {\arrow{>}}}]
    \draw [semithick, ->] (1.5,1) to [out=150,in=-90] (1,2);
    \draw [semithick,->] (1.5,1) to [out=30,in=-90] (2,2);
    \draw [semithick,->] (1,.5) to [out=30,in=-90] (1.5,1);
    \draw [semithick,->] (1,.5) to [out=150,in=-90] (0,2);
    \draw [semithick,postaction={decorate}] (1,0) -- (1,.5);
    \node at (0,2.3) {\small $k$};
    \node at (1,2.3) {\small $l$};
    \node at (2,2.3) {\small $r$};
\end{tikzpicture}
\end{equation}

\begin{equation}\label{eq:updigon}
\begin{tikzpicture}[anchorbase, decoration={markings,mark=at position 0.5 with {\arrow{>}}}]
\draw [semithick,postaction={decorate}] (0,0) -- (0,.5);
\draw [semithick,postaction={decorate}] (0,.5) to [out=150,in=-90] (-.5,1) to [out=90,in=-150] (0,1.5);
\draw [semithick,postaction={decorate}] (0,.5) to [out=30,in=-90] (.5,1) to [out=90,in=-30] (0,1.5);
\draw [semithick,->] (0,1.5) -- (0,2);
\node at (-.7,1) {\small $k$};
\node at (.7,1) {\small $l$};
\node at (0,-.3) {\small $k+l$};
\end{tikzpicture}
\;=\;
q^{-kl}\qbin{k}{k+l}
\begin{tikzpicture}[anchorbase]
\draw [semithick,->] (0,0) -- (0,2);
\node at (0,-.3) {\small $k+l$};
\end{tikzpicture}
\end{equation}

\begin{equation*}
\begin{tikzpicture}[anchorbase, decoration={markings,mark=at position 0.5 with {\arrow{>}}}]
\draw [semithick,->] (0,0) -- (0,2);
\draw [semithick,->] (1,0) -- (1,2);
\draw [semithick,postaction={decorate}] (0,.35) -- (1,.65);
\draw [semithick,postaction={decorate}] (1,1.35) -- (0,1.65);
\node at (0,-.3) {\small $k$};
\node at (1,-.3) {\small $l$};
\node at (.5,.2) {\small $1$};
\node at (.5,1.7) {\small $1$};
\end{tikzpicture}
\;-\;
\begin{tikzpicture}[anchorbase, decoration={markings,mark=at position 0.5 with {\arrow{>}}}]
\draw [semithick,->] (0,0) -- (0,2);
\draw [semithick,->] (1,0) -- (1,2);
\draw [semithick,postaction={decorate}] (1,.35) -- (0,.65);
\draw [semithick,postaction={decorate}] (0,1.35) -- (1,1.65);
\node at (0,-.3) {\small $k$};
\node at (1,-.3) {\small $l$};
\node at (.5,.2) {\small $1$};
\node at (.5,1.7) {\small $1$};
\end{tikzpicture}
\;=
q^{1-k+l}\qn{k-l}
\;
\begin{tikzpicture}[anchorbase, decoration={markings,mark=at position 0.5 with {\arrow{>}}}]
\draw [semithick,->] (0,0) -- (0,2);
\draw [semithick,->] (1,0) -- (1,2);
\node at (0,-.3) {\small $k$};
\node at (1,-.3) {\small $l$};
\end{tikzpicture}
\end{equation*}

Finally, we have:
\[
\begin{tikzpicture}[anchorbase, decoration={markings,mark=at position 0.5 with {\arrow{>}}}]
\draw [semithick,->] (0,0) -- (0,2);
\draw [semithick,->] (1,0) -- (1,2);
\draw [semithick,->] (2,0) -- (2,2);
\draw [semithick,postaction={decorate}] (1,1.35) -- (0,1.65);
\node at (.5,1.3) {$2$};
\draw [semithick,postaction={decorate}] (2,.35) -- (1,.65);
\node at (1.5,.3) {$1$};
\node at (0,-.3) {$k$};
\node at (1,-.3) {$l$};
\node at (2,-.3) {$r$};
\end{tikzpicture}
\;-\;
\begin{tikzpicture}[anchorbase, decoration={markings,mark=at position 0.5 with {\arrow{>}}}]
\draw [semithick,->] (0,0) -- (0,2);
\draw [semithick,->] (1,0) -- (1,2);
\draw [semithick,->] (2,0) -- (2,2);
\draw [semithick,postaction={decorate}] (1,1.45) -- (0,1.75);
\node at (.5,1.8) {$1$};
\draw [semithick,postaction={decorate}] (2,.85) -- (1,1.15);
\node at (1.5,.8) {$1$};
\draw [semithick,postaction={decorate}] (1,.25) -- (0,.55);
\node at (.5,.2) {$1$};
\node at (0,-.3) {$k$};
\node at (1,-.3) {$l$};
\node at (2,-.3) {$r$};
\end{tikzpicture}
\;+\;
\begin{tikzpicture}[anchorbase, decoration={markings,mark=at position 0.5 with {\arrow{>}}}]
\draw [semithick,->] (0,0) -- (0,2);
\draw [semithick,->] (1,0) -- (1,2);
\draw [semithick,->] (2,0) -- (2,2);
\draw [semithick,postaction={decorate}] (1,.35) -- (0,.65);
\node at (.5,.3) {$2$};
\draw [semithick,postaction={decorate}] (2,1.35) -- (1,1.65);
\node at (1.5,1.3) {$1$};
\node at (0,-.3) {$k$};
\node at (1,-.3) {$l$};
\node at (2,-.3) {$r$};
\end{tikzpicture}
=0;
\]
as well as all symmetries:
\[
\begin{tikzpicture}[anchorbase, decoration={markings,mark=at position 0.5 with {\arrow{>}}}]
\draw [semithick,->] (0,0) -- (0,2);
\draw [semithick,->] (1,0) -- (1,2);
\draw [semithick,->] (2,0) -- (2,2);
\draw [semithick,postaction={decorate}] (2,1.35) -- (1,1.65);
\node at (1.5,1.3) {$2$};
\draw [semithick,postaction={decorate}] (1,.35) -- (0,.65);
\node at (.5,.3) {$1$};
\node at (0,-.3) {$k$};
\node at (1,-.3) {$l$};
\node at (2,-.3) {$r$};
\end{tikzpicture}
\;-\;
\begin{tikzpicture}[anchorbase, decoration={markings,mark=at position 0.5 with {\arrow{>}}}]
\draw [semithick,->] (0,0) -- (0,2);
\draw [semithick,->] (1,0) -- (1,2);
\draw [semithick,->] (2,0) -- (2,2);
\draw [semithick,postaction={decorate}] (2,1.45) -- (1,1.75);
\node at (1.5,1.8) {$1$};
\draw [semithick,postaction={decorate}] (1,.85) -- (0,1.15);
\node at (.5,.8) {$1$};
\draw [semithick,postaction={decorate}] (2,.25) -- (1,.55);
\node at (1.5,.2) {$1$};
\node at (0,-.3) {$k$};
\node at (1,-.3) {$l$};
\node at (2,-.3) {$r$};
\end{tikzpicture}
\;+\;
\begin{tikzpicture}[anchorbase, decoration={markings,mark=at position 0.5 with {\arrow{>}}}]
\draw [semithick,->] (0,0) -- (0,2);
\draw [semithick,->] (1,0) -- (1,2);
\draw [semithick,->] (2,0) -- (2,2);
\draw [semithick,postaction={decorate}] (2,.35) -- (1,.65);
\node at (1.5,.3) {$2$};
\draw [semithick,postaction={decorate}] (1,1.35) -- (0,1.65);
\node at (.5,1.3) {$1$};
\node at (0,-.3) {$k$};
\node at (1,-.3) {$l$};
\node at (2,-.3) {$r$};
\end{tikzpicture}
=0;
\]
\[
\begin{tikzpicture}[anchorbase, decoration={markings,mark=at position 0.5 with {\arrow{>}}}]
\draw [semithick,->] (0,0) -- (0,2);
\draw [semithick,->] (1,0) -- (1,2);
\draw [semithick,->] (2,0) -- (2,2);
\draw [semithick,postaction={decorate}] (0,1.35) -- (1,1.65);
\node at (.5,1.3) {$2$};
\draw [semithick,postaction={decorate}] (1,.35) -- (2,.65);
\node at (1.5,.3) {$1$};
\node at (0,-.3) {$k$};
\node at (1,-.3) {$l$};
\node at (2,-.3) {$r$};
\end{tikzpicture}
\;-\;
\begin{tikzpicture}[anchorbase, decoration={markings,mark=at position 0.5 with {\arrow{>}}}]
\draw [semithick,->] (0,0) -- (0,2);
\draw [semithick,->] (1,0) -- (1,2);
\draw [semithick,->] (2,0) -- (2,2);
\draw [semithick,postaction={decorate}] (0,1.45) -- (1,1.75);
\node at (.5,1.8) {$1$};
\draw [semithick,postaction={decorate}] (1,.85) -- (2,1.15);
\node at (1.5,.8) {$1$};
\draw [semithick,postaction={decorate}] (0,.25) -- (1,.55);
\node at (.5,.2) {$1$};
\node at (0,-.3) {$k$};
\node at (1,-.3) {$l$};
\node at (2,-.3) {$r$};
\end{tikzpicture}
\;+\;
\begin{tikzpicture}[anchorbase, decoration={markings,mark=at position 0.5 with {\arrow{>}}}]
\draw [semithick,->] (0,0) -- (0,2);
\draw [semithick,->] (1,0) -- (1,2);
\draw [semithick,->] (2,0) -- (2,2);
\draw [semithick,postaction={decorate}] (0,.35) -- (1,.65);
\node at (.5,.3) {$2$};
\draw [semithick,postaction={decorate}] (1,1.35) -- (2,1.65);
\node at (1.5,1.3) {$1$};
\node at (0,-.3) {$k$};
\node at (1,-.3) {$l$};
\node at (2,-.3) {$r$};
\end{tikzpicture}
=0;
\]
\[
\begin{tikzpicture}[anchorbase, decoration={markings,mark=at position 0.5 with {\arrow{>}}}]
\draw [semithick,->] (0,0) -- (0,2);
\draw [semithick,->] (1,0) -- (1,2);
\draw [semithick,->] (2,0) -- (2,2);
\draw [semithick,postaction={decorate}] (1,1.35) -- (2,1.65);
\node at (1.5,1.3) {$2$};
\draw [semithick,postaction={decorate}] (0,.35) -- (1,.65);
\node at (.5,.3) {$1$};
\node at (0,-.3) {$k$};
\node at (1,-.3) {$l$};
\node at (2,-.3) {$r$};
\end{tikzpicture}
\;-\;
\begin{tikzpicture}[anchorbase, decoration={markings,mark=at position 0.5 with {\arrow{>}}}]
\draw [semithick,->] (0,0) -- (0,2);
\draw [semithick,->] (1,0) -- (1,2);
\draw [semithick,->] (2,0) -- (2,2);
\draw [semithick,postaction={decorate}] (1,1.45) -- (2,1.75);
\node at (1.5,1.8) {$1$};
\draw [semithick,postaction={decorate}] (0,.85) -- (1,1.15);
\node at (.5,.8) {$1$};
\draw [semithick,postaction={decorate}] (1,.25) -- (2,.55);
\node at (1.5,.2) {$1$};
\node at (0,-.3) {$k$};
\node at (1,-.3) {$l$};
\node at (2,-.3) {$r$};
\end{tikzpicture}
\;+\;
\begin{tikzpicture}[anchorbase, decoration={markings,mark=at position 0.5 with {\arrow{>}}}]
\draw [semithick,->] (0,0) -- (0,2);
\draw [semithick,->] (1,0) -- (1,2);
\draw [semithick,->] (2,0) -- (2,2);
\draw [semithick,postaction={decorate}] (1,.35) -- (2,.65);
\node at (1.5,.3) {$2$};
\draw [semithick,postaction={decorate}] (0,1.35) -- (1,1.65);
\node at (.5,1.3) {$1$};
\node at (0,-.3) {$k$};
\node at (1,-.3) {$l$};
\node at (2,-.3) {$r$};
\end{tikzpicture}
=0.
\]
\end{definition}

The upward webs defined above naturally assemble in a category that we denote $\iWeb^+$ of universal purely upward-going webs. The reason to introduce this category is that the parameter $n$ (or later $x$) is invisible to this part of the category. We refer to \cite{CKM} (see also~\cite{QS2} or~\cite{TVW}) for related categories and their properties.

\begin{definition} \label{def:iWeb+}
The category $\iWeb^+$ has objects, upward arrows on a horizontal line, labelled in $\N^*$. Morphisms are $\Z[q^{\pm 1}]$-linear combinations of upward webs (modulo their relations).
\end{definition}

The category $\iWeb^+$ is braided, with formulas for the braiding as follows:

\begin{equation}
    \label{eq:braiding}
\begin{tikzpicture}[anchorbase]
\draw [semithick,->] (0,0) -- (1,1);
\node at (0,-.2) {\small $k$};
\draw [semithick] (1,0) -- (.6,.4);
\node at (1,-.2) {\small $l$};
\draw [semithick,->] (.4,.6) -- (0,1);
\end{tikzpicture}    
=
(-q)^{kl}\sum_{\substack{a,b\geq 0 \\ b-a=k-l}} (-q)^{b-k}
\begin{tikzpicture}[anchorbase, decoration={
    markings,
    mark=at position 0.5 with {\arrow{>}}}]
\draw [semithick,->] (0,0) -- (0,1);
\node at (0,-.2) {\small $k$};
\draw [semithick,->] (1,0) -- (1,1);
\node at (1,-.2) {\small $l$};
\draw [semithick,postaction={decorate}] (0,.2) -- (1,.3);
\node at (.5,.1) {\tiny $b$};
\draw [semithick,postaction={decorate}] (1,.7) -- (0,.8);
\node at (.5,.9) {\tiny $a$};
\end{tikzpicture}
\;,\quad
\begin{tikzpicture}[anchorbase]
\draw [semithick,->] (1,0) -- (0,1);
\node at (0,-.2) {\small $k$};
\draw [semithick] (0,0) -- (.4,.4);
\node at (1,-.2) {\small $l$};
\draw [semithick,->] (.6,.6) -- (1,1);
\end{tikzpicture}    
=
(-q)^{-kl}\sum_{\substack{a,b\geq 0 \\ b-a=l-k}} (-q)^{l-b}
\begin{tikzpicture}[anchorbase, decoration={
    markings,
    mark=at position 0.5 with {\arrow{>}}}]
\draw [semithick,->] (0,0) -- (0,1);
\node at (0,-.2) {\small $k$};
\draw [semithick,->] (1,0) -- (1,1);
\node at (1,-.2) {\small $l$};
\draw [semithick,postaction={decorate}] (1,.2) -- (0,.3);
\node at (.5,.1) {\tiny $b$};
\draw [semithick,postaction={decorate}] (0,.7) -- (1,.8);
\node at (.5,.9) {\tiny $a$};
\end{tikzpicture}
\end{equation}

\begin{remark}\label{rem:braiding}
If one of the strand is colored $1$, the braiding reduces to 
\[
\begin{tikzpicture}[anchorbase,semithick]
\draw[->] (0.3,0)node[below]{\scriptsize{$k$}} -- (1.3,1);
\draw (1.3,0)node[below]{\scriptsize{$1$}} -- (0.9,0.4);
\draw[->] (0.7,0.6)-- (0.3,1);

\node at(2.5,0.5) {$=~(-q)^{k-1}$};
\draw[->] (3.5,0)node[below]{\scriptsize{$k$}} -- (3.5,1)node[above] {\scriptsize{$1$}};
\draw[->] (4.5,0)node[below]{\scriptsize{$1$}}-- (4.5,1)node[above] {\scriptsize{$k$}};
\draw (3.5,0.4) -- (4.5,0.6) node[midway,sloped,below] {\scriptsize{$k-1$}};

\node at(5.5,0.5) {$+~(-q)^k$};
\draw (6.5,0)node[below] {\scriptsize{$k$}} to[out=90,in=-135] (6.75,0.3);
\draw (7,0)node[below] {\scriptsize{$1$}} to[out=90,in=-45] (6.75,0.3);
\draw (6.75,0.3)--(6.75,0.7);
\draw[->] (6.75,0.7)to[out=135,in=-90] (6.5,1)node[above] {\scriptsize{$1$}};
\draw[->] (6.75,0.7)to[out=45,in=-90] (7,1)node[above] {\scriptsize{$k$}};
\end{tikzpicture}
\text{ and }
\begin{tikzpicture}[anchorbase,semithick]
\draw[->] (0.3,0)node[below]{\scriptsize{$1$}} -- (1.3,1);
\draw (1.3,0)node[below]{\scriptsize{$k$}} -- (0.9,0.4);
\draw[->] (0.7,0.6)-- (0.3,1);

\node at(2.5,0.5) {$=~(-q)^{k-1}$};
\draw[->] (3.5,0)node[below]{\scriptsize{$1$}} -- (3.5,1)node[above] {\scriptsize{$k$}};
\draw[->] (4.5,0)node[below]{\scriptsize{$k$}}-- (4.5,1)node[above] {\scriptsize{$1$}};
\draw (3.5,0.6) -- (4.5,0.4) node[midway,sloped,above] {\scriptsize{$k-1$}};

\node at(5.5,0.5) {$+~(-q)^k$};
\draw (6.5,0)node[below] {\scriptsize{$1$}} to[out=90,in=-135] (6.75,0.3);
\draw (7,0)node[below] {\scriptsize{$k$}} to[out=90,in=-45] (6.75,0.3);
\draw (6.75,0.3)--(6.75,0.7);
\draw[->] (6.75,0.7)to[out=135,in=-90] (6.5,1)node[above] {\scriptsize{$k$}};
\draw[->] (6.75,0.7)to[out=45,in=-90] (7,1)node[above] {\scriptsize{$1$}};
\end{tikzpicture}
\]
\noindent In particular, the following relation will be useful. 
\begin{equation*}
\begin{tikzpicture}[anchorbase]
\draw [semithick] (0,0) -- (0,.4);
\draw [semithick] (0,.4) to [out=120,in=-90] (-.3,.5) to [out=90,in=-90] (.3,.9) -- (.3,1);
\draw [semithick] (0,.4) to [out=60,in=-90] (.3,.5) to [out=90,in=-45] (.1,.66);
\draw [semithick] (-.1,.74) to [out=135,in=-90] (-.3,.9) -- (-.3,1);
\node at (0,-.3) {$k$};
\node at (-.3,1.3) {$1$};
\node [rotate=90] at (.3,1.4) {$k-1$};
\end{tikzpicture}
\;=(-1)^{k-1}q^{2k-2}\;
\begin{tikzpicture}[anchorbase]
\draw [semithick] (0,0) -- (0,.4);
\draw [semithick] (0,.4) to [out=120,in=-90] (-.3,.6) -- (-.3,1);
\draw [semithick] (0,.4) to [out=60,in=-90] (.3,.6) -- (.3,1);
\node at (0,-.3) {$k$};
\node at (-.3,1.3) {$1$};
\node [rotate=90] at (.3,1.4) {$k-1$};
\end{tikzpicture}
\end{equation*}

This relation can be proven as follows:
\begin{gather*}
    \begin{tikzpicture}[anchorbase]
\draw [semithick] (0,0) -- (0,.4);
\draw [semithick] (0,.4) to [out=120,in=-90] (-.3,.5) to [out=90,in=-90] (.3,.9) -- (.3,1);
\draw [semithick] (0,.4) to [out=60,in=-90] (.3,.5) to [out=90,in=-45] (.1,.66);
\draw [semithick] (-.1,.74) to [out=135,in=-90] (-.3,.9) -- (-.3,1);
\node at (0,-.3) {$k$};
\node at (-.3,1.3) {$1$};
\node [rotate=90] at (.3,1.4) {$k-1$};
\end{tikzpicture}
=(-q)^{k-2}
    \begin{tikzpicture}[anchorbase]
\draw [semithick] (0,0) -- (0,.4);
\draw [semithick] (0,.4) to [out=120,in=-90] (-.3,.5);
\draw [semithick] (-.3,.5) to [out=90,in=-90]  (-.3,1);
\draw [semithick] (0,.4) to [out=60,in=-90] (.3,.5);
\draw [semithick] (.3,.5) to [out=90,in=-90] (.3,1);
\draw [semithick] (-.3,.65) to [out=60,in=-120] (.3,.85);
\node at (0,-.3) {$k$};
\node at (-.3,1.3) {$1$};
\node at (.4,.4) {$1$};
\node [rotate=90] at (.3,1.4) {$k-1$};
\end{tikzpicture}
+(-q)^{k-1}
    \begin{tikzpicture}[anchorbase]
\draw [semithick] (0,0) -- (0,.4);
\draw [semithick] (0,.4) to [out=120,in=-90] (-.3,.5) to [out=90,in=-120] (0,.6);
\draw [semithick] (0,.4) to [out=60,in=-90]  (.3,.5) to [out=90,in=-60] (0,.6);
\draw [semithick] (0,.6) -- (0,.7);
\draw [semithick] (0,.7) to [out=120,in=-90] (-.3,.85) -- (-.3,1);
\draw [semithick] (0,.7) to [out=60,in=-90] (.3,.85) -- (.3,1);
\node at (0,-.3) {$k$};
\node at (-.3,1.3) {$1$};
\node at (.4,.4) {$1$};
\node [rotate=90] at (.3,1.4) {$k-1$};
\end{tikzpicture}
=(-q)^{k-2}
    \begin{tikzpicture}[anchorbase]
\draw [semithick] (0,0) -- (0,.4);
\draw [semithick] (0,.4) to [out=120,in=-90] (-.3,.5);
\draw [semithick] (-.3,.5) to [out=90,in=-90]  (-.3,1);
\draw [semithick] (0,.4) to [out=60,in=-90] (.3,.5);
\draw [semithick] (.3,.5) to [out=90,in=-90] (.3,1);
\draw [semithick] (.3,.65) to [out=120,in=-90] (0,.75) to [out=90,in=-120] (.3,.85);
\node at (0,-.3) {$k$};
\node at (-.3,1.3) {$1$};
\node at (.4,.75) {$1$};
\node [rotate=90] at (.3,1.4) {$k-1$};
\end{tikzpicture}
+(-1)^{k-1}\qn{k}
\begin{tikzpicture}[anchorbase]
\draw [semithick] (0,0) -- (0,.4);
\draw [semithick] (0,.4) to [out=120,in=-90] (-.3,.6) -- (-.3,1);
\draw [semithick] (0,.4) to [out=60,in=-90] (.3,.6) -- (.3,1);
\node at (0,-.3) {$k$};
\node at (-.3,1.3) {$1$};
\node [rotate=90] at (.3,1.4) {$k-1$};
\end{tikzpicture}
\\
=((-1)^{k-2}\qn{k-1}+(-1)^{k-1}\qn{k})
\begin{tikzpicture}[anchorbase]
\draw [semithick] (0,0) -- (0,.4);
\draw [semithick] (0,.4) to [out=120,in=-90] (-.3,.6) -- (-.3,1);
\draw [semithick] (0,.4) to [out=60,in=-90] (.3,.6) -- (.3,1);
\node at (0,-.3) {$k$};
\node at (-.3,1.3) {$1$};
\node [rotate=90] at (.3,1.4) {$k-1$};
\end{tikzpicture}
=(-1)^{k-1}q^{2k-2}
\begin{tikzpicture}[anchorbase]
\draw [semithick] (0,0) -- (0,.4);
\draw [semithick] (0,.4) to [out=120,in=-90] (-.3,.6) -- (-.3,1);
\draw [semithick] (0,.4) to [out=60,in=-90] (.3,.6) -- (.3,1);
\node at (0,-.3) {$k$};
\node at (-.3,1.3) {$1$};
\node [rotate=90] at (.3,1.4) {$k-1$};
\end{tikzpicture}.
\end{gather*}
\end{remark}

\subsection{A universal web category, and the HOMFLY-PT invariant}

In this section, we want to define a version of the category of webs that allows for up and down arrows, and into which links will naturally map. The definitions we present mimic the use of $q$-rationals from Section~\ref{sec:qrat}. For this reason, the coefficients that appear in our definitions might look peculiar to readers used to the more symmetric definition of quantum integers $[n]=\frac{q^n-q^{-n}}{q-q^{-1}}$. It is nonetheless an easy exercise to check that the two versions are equivalent.

To allow edges to go up and down expresses on the category side by saying that we want to consider a pivotal version of the category, which allows for duals.

Here we will follow~\cite[Definition 6.5]{QS2}.
\begin{definition} \label{def:iWeb}
We define $\iWeb$ to be the braided, pivotal category generated by $\iWeb^+$, over the ring $\Z[q^{\pm 1},a^{\pm 1}][(q-q^{-1})^{-1}]$, with additional relations:
\begin{itemize}
    \item circle evaluation:
    \begin{equation}
    \label{eq:unknot_i}
\begin{tikzpicture}[anchorbase]
\draw [semithick, ->] (0,0) arc (0:360:.5) ;
\node at (.2,-.3) {$k$};
\end{tikzpicture}
\; =\;
\begin{tikzpicture}[anchorbase]
\draw [semithick, ->] (0,0) arc (0:-360:.5) ;
\node at (.2,-.3) {$k$};
\end{tikzpicture}
\; = \;
\mu_k.
\end{equation}
The $\mu_k$'s can be explicitly determined, as in~\cite{QS2}. For the purpose of the definition, only the $k=1$ case is needed, as the other ones follow. Let us thus just indicate that $\mu_1=\frac{a-a^{-1}}{q-q^{-1}}$.\footnote{To be complete, $\mu_k=\frac{a-a^{-1}}{q-q^{-1}}\frac{aq^{-1}-a^{-1}q}{q-q^{-1}}\cdots \frac{aq^{1-k}-a^{-1}q^{k-1}}{q-q^{-1}}\frac{q^{\frac{k(k-1)}{2}}}{\qn{k}!}$.}
\item sideway digon removal:
\begin{equation}
    \label{eq:sideway_digon_1}
    \begin{tikzpicture}[anchorbase]
    \draw [semithick,->] (0,0) -- (0,3) ;
    \draw [semithick,->] (0,2) to [out=90,in=90] (1,2) -- (1,1.5);
    \draw [semithick] (1,1.5) -- (1,1) to [out=-90,in=-90] (0,1);
    \node at (0,-.3) {$1$};
    \node at (0,3.3) {$1$};
    \node at (1.3,1.5) {$1$};
    \node at (-.3,1.5) {$2$}; 
    \end{tikzpicture}
    \; = \;
    \begin{tikzpicture}[anchorbase]
    \draw [semithick,->] (0,0) -- (0,3) ;
    \draw [semithick,->] (0,2) to [out=90,in=90] (-1,2) -- (-1,1.5);
    \draw [semithick] (-1,1.5) -- (-1,1) to [out=-90,in=-90] (0,1);
    \node at (0,-.3) {$1$};
    \node at (0,3.3) {$1$};
    \node at (-1.3,1.5) {$1$};
    \node at (.3,1.5) {$2$}; 
    \end{tikzpicture}
    \; = \;
\frac{aq^{-1}-a^{-1}q}{q-q^{-1}}.
    \begin{tikzpicture}[anchorbase]
    \draw [semithick,->] (0,0) -- (0,3) ;
    \node at (0,-.3) {$1$};
    \node at (0,3.3) {$1$};
    \end{tikzpicture}
\end{equation}
\item square removal (together with its mirror image):
\begin{equation}
    \label{eq:mixed_square_QS}
    \begin{tikzpicture}[anchorbase,decoration={
    markings,
    mark=at position 0.5 with {\arrow{>}}}]
    \draw[semithick, postaction={decorate}] (0,0) to [out=45,in=180] (.5,.5);
    \draw [semithick ,postaction={decorate}] (.5,.5) -- (1.5,.5);
    \draw [semithick,postaction={decorate}] (.5,1.5) to [out=180,in=180] (.5,.5);
    \draw [semithick, postaction={decorate}] (1.5,1.5) -- (.5,1.5);
    \draw [semithick,postaction={decorate}] (1.5,.5) to [out=0,in=0] (1.5,1.5);
    \draw [semithick,->] (1.5,.5) to [out=0,in=135] (2,0);
    \draw [semithick, postaction={decorate}] (2,2) to [out=-135,in=0] (1.5,1.5);
    \draw [semithick, ->] (.5,1.5) to [out=185,in=-45] (0,2);
    \node at (-.2,-.2) {$1$};
    \node at (-.2,2.2) {$1$};
    \node at (2.2,-.2) {$1$};
    \node at (2.2,2.2) {$1$};
    \node at (1,.2) {$2$};
    \node at (1,1.8) {$2$};
    \end{tikzpicture}
    \; = \;
    \begin{tikzpicture}[anchorbase]
    \draw [semithick, ->] (0,0) to [out=45,in=-45] (0,2);
    \draw [semithick,->] (2,2) to [out=-135,in=135] (2,0);
    \end{tikzpicture}
    \; + \;
    \frac{aq^{-2}-a^{-1}q^2}{q-q^{-1}}\;
    \begin{tikzpicture}[anchorbase]
    \draw [semithick, ->] (0,0) to [out=45,in=135] (2,0);
    \draw [semithick,->] (2,2) to [out=-135,in=-45] (0,2);
    \end{tikzpicture}
\end{equation}

\end{itemize}
\end{definition}

\begin{lemma}\label{lemma:curlHomfly}
Colored twists act as follows:
\[
    \begin{tikzpicture}[anchorbase]
    \draw [semithick] (0,1) -- (0,.65);
    \draw [semithick] (.1,.4) to [out=-80,in=180] (.3,.2) to [out=0,in=-90] (.6,.5) to [out=90,in=0] (.3,.8) to [out=180,in=90] (0,.5) -- (0,0);
        \node at (0,-.3) {$k$};
    \node at (0,1.3) {\vphantom{$k$}};
\end{tikzpicture}
\; = a^{-k}q^{k(2k-1)}\;
    \begin{tikzpicture}[anchorbase]
    \draw [semithick] (0,0) -- (0,1);
    \node at (0,-.3) {$k$};
    \node at (0,1.3) {\vphantom{$k$}};
    \end{tikzpicture},\quad
\begin{tikzpicture}[anchorbase]
    \draw [semithick] (0,0) -- (0,.35);
    \draw [semithick] (.1,.6) to [out=80,in=180] (.3,.8) to [out=0,in=90] (.6,.5) to [out=-90,in=0] (.3,.2) to [out=180,in=-90] (0,.5) -- (0,1);
    \node at (0,-.3) {$k$};
    \node at (0,1.3) {\vphantom{$k$}};
    \end{tikzpicture}= a^kq^{-k(2k-1)} \;
    \begin{tikzpicture}[anchorbase]
    \draw [semithick] (0,0) -- (0,1);
    \node at (0,-.3) {$k$};
    \node at (0,1.3) {\vphantom{$k$}};
    \end{tikzpicture}.
\]
\end{lemma}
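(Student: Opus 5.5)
The plan is to prove the first formula by induction on $k$. The second formula then follows for free: by the framed Reidemeister move $R'_1$, the two curls are mutually inverse endomorphisms of the $k$-strand. Since the first curl is the scalar $a^{-k}q^{k(2k-1)}$, the second must be its inverse $a^{k}q^{-k(2k-1)}$.

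\emph{Base case $k=1$.} Write the curl as a crossing between two $1$-labelled strands, with one output closed back to the corresponding input by a cap and a cup. Expand the crossing with the braiding formula \eqref{eq:braiding} for $k=l=1$. Only two terms survive, $a=b=0$ and $a=b=1$, so the crossing equals $(-q)\bigl((-q)^{-1}\,\mathrm{id}+H\bigr)=\mathrm{id}-qH$. Here $H$ denotes the web that merges the two strands into a $2$-labelled edge and splits it again.
\begin{itemize}
\item Closing up the identity term produces a straight strand together with a disjoint circle, which \eqref{eq:unknot_i} evaluates to $\mu_1=\frac{a-a^{-1}}{q-q^{-1}}$.
\item Closing up $H$ produces exactly the sideway digon \eqref{eq:sideway_digon_1}, worth $\frac{aq^{-1}-a^{-1}q}{q-q^{-1}}$.
\end{itemize}
Combining the two terms gives
\[
\frac{a-a^{-1}-q(aq^{-1}-a^{-1}q)}{q-q^{-1}}=\frac{a^{-1}(q^2-1)}{q-q^{-1}}=a^{-1}q,
\]
which is the claimed value $a^{-1}q^{1\cdot 1}$. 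Which of the two curls this computes depends on the crossing convention. If the signs turn out reversed, the same computation with the second braiding formula handles the other curl.

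\emph{Inductive step.} By the digon relation \eqref{eq:updigon}, the identity of the $k$-strand is a nonzero scalar multiple of (split into $1$ and $k-1$) followed by (merge). So it suffices to compute the curl precomposed with a split $s\colon k\to 1\otimes(k-1)$. Since $\iWeb$ is braided and pivotal, the twist $\theta$ is natural. Naturality together with the balancing axiom gives
\[
\theta_k\circ m = m\circ c^2\circ(\theta_1\otimes\theta_{k-1})
\]
for the merge $m$, where $c$ is the braiding. The same identity holds on the split side, and in this form it is just the diagrammatic statement that a curl slides through a trivalent vertex. By the final relation of Remark~\ref{rem:braiding}, composing a split with a crossing multiplies it by $(-1)^{k-1}q^{2k-2}$. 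Applying this twice shows that the double crossing $c^2$ composed with the split multiplies it by $q^{4(k-1)}$.

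The induction hypothesis then gives
\[
\theta_k=q^{4(k-1)}\,\theta_1\,\theta_{k-1}=a^{-k}\,q^{\,4(k-1)+1+(k-1)(2k-3)}.
\]
The exponent of $q$ simplifies as $1+(k-1)(2k+1)=2k^2-k=k(2k-1)$, as required.

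\emph{Main obstacle.} I expect the difficulty to lie in the bookkeeping of the inductive step. Remark~\ref{rem:braiding} states the relation only for one crossing type and for a split. I need it for both crossings of the double braid, and possibly in its mirrored or merge form. For the other crossing I will derive it the same way, from the second formula of the remark, or by reflecting the picture. Finally, I will check on the $k=1$ computation that the sign conventions in the balancing identity match the particular curl drawn in the statement.
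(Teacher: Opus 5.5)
Your proposal is correct and follows the paper's proof essentially step for step. The base case is the same: the crossing expands into a disjoint circle minus $q$ times the sideway digon, giving $a^{-1}q$. The inductive step is also the same: you explode the $k$-strand via \eqref{eq:updigon}, slide the curl through the vertex to get curls on the $1$- and $(k-1)$-strands plus a full twist, and apply Remark~\ref{rem:braiding} twice to obtain $\theta_k=q^{4(k-1)}\theta_1\theta_{k-1}$. Your naturality/balancing formulation is just an algebraic phrasing of the paper's diagrammatic slide. The second split--crossing relation you flagged (split into $(1,k-1)$ followed by the same crossing type) follows from the second formula of Remark~\ref{rem:braiding}, with the same coefficient $(-1)^{k-1}q^{2k-2}$. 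A plain left--right reflection of the picture would not give it, since that flips the crossing type.
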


\begin{proof}
We only prove the first equality, as the second one follows. The proof goes by induction. Denote $\eta_k$ the coefficient we are willing to compute. We start by computing the base case of $k=1$:
\[
    \begin{tikzpicture}[anchorbase]
    \draw [semithick,<-] (0,2) -- (0,1.15);
    \draw [semithick] (.1,.9) to [out=-80,in=180] (.3,.7) to [out=0,in=-90] (.6,1) to [out=90,in=0] (.3,1.3) to [out=180,in=90] (0,1) -- (0,0);
    \end{tikzpicture}
    =
    \begin{tikzpicture}[anchorbase]
    \draw [semithick,->] (0,0) -- (0,2);
    \draw [semithick,->] (.75,1) arc (360:0:.25);
    \end{tikzpicture}
    -q\;
    \begin{tikzpicture}[anchorbase]
    \draw [semithick] (0,0) to [out=90,in=-120] (.25,.75);
    \draw [semithick,->] (.25,.75) -- (.25,1);
    \node at (0,1) {\tiny $2$};
    \draw [semithick] (.25,1) -- (.25,1.25);
    \draw [semithick,->] (.25,1.25) to [out=120,in=-90] (0,2);
    \draw [semithick] (.25,1.25) to [out=60,in=180] (.5,1.35) to [out=0,in=0] (.5,.65) to [out=180,in=-60] (.25,.75);
    \end{tikzpicture}
    =(\frac{a-a^{-1}}{q-q^{-1}}-q\frac{aq^{-1}-a^{-1}q}{q-q^{-1}})\;
    \begin{tikzpicture}[anchorbase]
    \draw [semithick,->] (0,0) -- (0,2);
    \end{tikzpicture}
\]
Above we have used \cref{eq:unknot_i} and \cref{eq:sideway_digon_1}. It follows that $\eta_1=a^{-1}q$.

In the general case, one can first explode the strand:
\[
    \begin{tikzpicture}[anchorbase]
    \draw [semithick] (0,2) -- (0,1.15);
    \draw [semithick] (.1,.9) to [out=-80,in=180] (.3,.7) to [out=0,in=-90] (.6,1) to [out=90,in=0] (.3,1.3) to [out=180,in=90] (0,1) -- (0,0);
        \node at (0,-.3) {$k$};
    \node at (0,2.3) {\vphantom{$k$}};
\end{tikzpicture}
\;=\frac{1}{q^{1-k}\qn{k}}
\begin{tikzpicture}[anchorbase]
\draw[semithick] (0,0) -- (0,.4);
\draw [semithick] (0,.4) to [out=120,in=180] (.75,1.5) to [out=0,in=0] (.75,.5) to [out=180,in=-40] (.25,.7);
\draw [semithick] (.15,.8) to [out=135,in=-75] (.05,.95);
\draw [semithick] (-.05,1.1) to [out=110,in=-120] (0,1.6);
\draw [opacity=.4] (0,.4) to [out=60,in=180] (.75,1.25) to [out=0,in=0] (.75,.75) to [out=180,in=-60] (0,1.6);
\draw [semithick] (0,.4) to [out=60,in=180] (.75,1.25) to [out=0,in=0] (.75,.75) to [out=180,in=-45] (.35,.95);
\draw [semithick] (.28,1.05) to [out=125,in=-70] (.22,1.17);
\draw [semithick] (.15,1.3) to [out=110,in=-60] (0,1.6);
\draw[semithick] (0,1.6) -- (0,2);
\node at (0,-.3) {$k$};
\node at (0,2.3) {\vphantom{$k$}};
\node [rotate=90] at (-.25,.6) {\tiny $k-1$};
\end{tikzpicture}
\;=\frac{1}{q^{1-k}\qn{k}}\;
\begin{tikzpicture}[anchorbase]
\draw [semithick] (0,0) -- (0,.4);
\draw [semithick] (0,.4) to [out=120,in=-90] (-.3,.5) to [out=90,in=-90] (.3,.9);
\draw [semithick] (0,.4) to [out=60,in=-90] (.3,.5) to [out=90,in=-45] (.1,.66);
\draw [semithick] (-.1,.74) to [out=135,in=-90] (-.3,.9);
\draw [semithick] (-.3,.9) -- (-.3,1) arc(180:-0:.15) arc(0:-160:.13);
\draw [semithick] (.3,.9) -- (.3,1) arc(180:-0:.15) arc(0:-160:.13);
\draw [semithick] (-.3,1.1) to [out=90,in=-90] (.3,1.5) to [out=90,in=-60] (0,1.6);
\draw [semithick] (.3,1.1) to [out=90,in=-45] (.1,1.26);
\draw [semithick] (-.1,1.34) to [out=135,in=-90] (-.3,1.5) to [out=90,in=-120] (0,1.6);
\draw [semithick] (0,1.6) -- (0,2);
\node at (0,-.3) {$k$};
\node at (0,2.3) {\vphantom{$k$}};
\end{tikzpicture}
\;=\frac{1}{q^{1-k}\qn{k}}\eta_1\eta_{k-1}q^{4(k-1)}\;
\begin{tikzpicture}[anchorbase]
\draw [semithick] (0,0) -- (0,.4);
\draw [semithick] (0,.4) to [out=120,in=-90] (-.3,1) to [out=90,in=-120] (0,1.6);
\draw [semithick] (0,.4) to [out=60,in=-90] (.3,1) to [out=90,in=-60] (0,1.6);
\draw [semithick] (0,1.6) -- (0,2);
\node at (0,-.3) {$k$};
\node at (0,2.3) {\vphantom{$k$}};
\node [rotate=-120] at (.25,.6) {\tiny $k-1$};
\end{tikzpicture}
\;=\eta_1\eta_{k-1}q^{4(k-1)}\;
\begin{tikzpicture}[anchorbase]
\draw [semithick] (0,0) -- (0,2);
\node at (0,-.3) {$k$};
\node at (0,2.3) {\vphantom{$k$}};
\end{tikzpicture}
\]
Throughout we have used the relation written in Remark \ref{rem:braiding}. Then the claim follows by extracting the value of the $\eta_k$'s from the recursion.
\end{proof}

\begin{lemma}
Higher versions of \cref{eq:sideway_digon_1} can be deduced, yielding:
\begin{equation}
    \label{eq:sideway_digon_k}
    \begin{tikzpicture}[anchorbase]
    \draw [semithick,->] (0,0) -- (0,3) ;
    \draw [semithick,->] (0,2) to [out=90,in=90] (1,2) -- (1,1.5);
    \draw [semithick] (1,1.5) -- (1,1) to [out=-90,in=-90] (0,1);
    \node at (0,-.3) {$k$};
    \node at (0,3.3) {$k$};
    \node at (1.3,1.5) {$1$};
    \node[rotate=90] at (-.3,1.5) {$k+1$}; 
    \end{tikzpicture}
    \; = \;
    \begin{tikzpicture}[anchorbase]
    \draw [semithick,->] (0,0) -- (0,3) ;
    \draw [semithick,->] (0,2) to [out=90,in=90] (-1,2) -- (-1,1.5);
    \draw [semithick] (-1,1.5) -- (-1,1) to [out=-90,in=-90] (0,1);
    \node at (0,-.3) {$k$};
    \node at (0,3.3) {$k$};
    \node at (-1.3,1.5) {$1$};
    \node[rotate=90] at (.3,1.5) {$k+1$}; 
    \end{tikzpicture}
    \; = \;
\frac{aq^{-k}-a^{-1}q^k}{q-q^{-1}}.
    \begin{tikzpicture}[anchorbase]
    \draw [semithick,->] (0,0) -- (0,3) ;
    \node at (0,-.3) {$k$};
    \node at (0,3.3) {$k$};
    \end{tikzpicture}
\end{equation}
\end{lemma}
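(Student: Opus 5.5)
Denote by $\lambda_k$ the scalar by which the left-hand web of \eqref{eq:sideway_digon_k} (a $k$-strand merging with a downward $1$-strand into a $(k+1)$-edge, then splitting back) acts. I would argue by induction on $k$, with $k=1$ given by \eqref{eq:sideway_digon_1}. Two facts are needed first. The web is an endomorphism of the $k$-labelled object, so it is a scalar multiple of the identity. I would justify this by exploding the $k$-strand into $1$-strands using the digon relation \eqref{eq:updigon}, and by using that the mirror version follows from the pivotal structure (rotating by $\pi$). Alternatively, it suffices to show that the web equals a scalar times the identity after inserting $\frac{1}{q^{-k(k-1)/2}\qn{k}!}$-normalized explosions. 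The equality of the left and right versions should follow from the same argument applied to the mirror, or directly from \eqref{eq:sideway_digon_1} together with the mirror square relation.

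For the inductive step, I would split the $k$-strand as $(k-1)+1$ using \eqref{eq:updigon}, dividing by $q^{-(k-1)}\qn{k}$. By associativity \eqref{eq:triwebs}, the merge of $k$ with the returning $1$ into $k+1$ then factors as merging $1$ with $1$ into $2$, then $(k-1)$ with $2$. The aim is to rewrite the resulting picture so that the downward $1$-strand interacts with only one of the pieces. Concretely, I would use the square-switch relation (the $k,l$ commutation relation $E F - F E = q^{1-k+l}\qn{k-l}$ in ladder form) to move the cap/cup of the downward $1$-strand past the rung connecting the two pieces. One term gives a sideway digon on a $1$-strand, contributing $\lambda_1$. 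The other gives a sideway digon on the $(k-1)$-strand, contributing $\lambda_{k-1}$, up to the coefficient from \eqref{eq:mixed_square_QS} or the ladder relation.

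This yields a linear recursion of the form $\lambda_k=\alpha_k\lambda_{k-1}+\beta_k$ with explicit $q$-coefficients. I would then check that $\frac{aq^{-k}-a^{-1}q^k}{q-q^{-1}}$ satisfies it. The target identity is essentially $\lambda_k = q^{-1}\lambda_{k-1} - q^{k-1}\cdot(\text{something with } \qn{\cdot})$, mirroring $aq^{-k}-a^{-1}q^k = q^{-1}(aq^{1-k}-a^{-1}q^{k-1}) - a^{-1}(q^{k}-q^{k-2})$.

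A cleaner alternative I would try first uses the curl formula of Lemma~\ref{lemma:curlHomfly}. Close up a curl on the $k$-strand and expand one crossing between the $k$-strand and a $1$-strand via Remark~\ref{rem:braiding}. The two terms give a $1$-coloured circle times the identity, and the sideway digon $\lambda_k$, so $\eta$-type data equals $(-q)^{k-1}\mu_1+(-q)^k\lambda_k$ up to orientation conventions. This is a direct generalization of the $k=1$ computation in the proof of Lemma~\ref{lemma:curlHomfly}. The catch is that the curl there is of the $k$-strand with itself, not with a $1$-strand. So I would instead consider a $k$-strand encircled by a $1$-strand, making a full twist between colours $k$ and $1$, and compute it two ways: by Lemma~\ref{lemma:curlHomfly} applied to the $(k+1)$-strand, $k$ and $1$ in cabled form, and by Remark~\ref{rem:braiding}. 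The main obstacle is bookkeeping signs and powers of $q$ in either route. The conceptual issue is making sure the reduction yields only the digon and the identity, with no leftover web such as a digon with label $k-1$ inside.
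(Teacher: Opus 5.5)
\textbf{There is a genuine gap in your inductive step.}

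The relation you want to use to ``move the cap/cup of the downward $1$-strand past the rung'' is the commutation relation $EF-FE=q^{1-k+l}\qn{k-l}$. It lives in $\iWeb^+$: all its edges point upward, so it rearranges the upward ladder but never changes how the downward strand is attached.

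The outcome you predict also cannot be right. At $k=2$ the two digons you expect coincide, and you would get $\lambda_2\in\Q(q)\lambda_1+\Q(q)$, which is false. More generally, a recursion $\lambda_k=\alpha\lambda_{k-1}+\beta$ with $\alpha,\beta$ free of $a$ is impossible for every $k$: the coefficient of $a$ forces $\alpha=q^{-1}$, and then the coefficients of $a^{-1}$ disagree.

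The underlying reason is that the substitution $a\mapsto -q^2a^{-1}$ fixes every $a$-free relation (upward relations, braiding) and the value in \eqref{eq:sideway_digon_1}, but exchanges $\lambda_2$ and $\mu_1$. So some input not invariant under it must genuinely enter, and your plan never says how. Candidates are the circle value, or the mixed square \eqref{eq:mixed_square_QS}, whose coefficient is $\lambda_2$ itself.

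You did write the correct recursion $\lambda_k=q^{-1}\lambda_{k-1}-a^{-1}q^{k-1}$ at the level of fractions. Its inhomogeneous term is $-a^{-1}q^{k-1}=q^{k-1}\lambda_1-q^{k-2}\mu_1$, which you attribute to ``something with $\qn{\cdot}$'' without giving a diagrammatic source. Your preliminary claim that the web is a multiple of $\mathrm{id}_k$ is not justified as written, but it is also unnecessary: a correct computation outputs the scalar directly.

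\textbf{The missing ingredient is the braiding.} In the paper, one explodes $k$ as $1+(k-1)$ via \eqref{eq:updigon}, with factor $q^{k-1}/\qn{k}$. One then reassociates with \eqref{eq:triwebs} so that the $(k-1)$-strand first absorbs the returning $1$-strand and the exploded $1$ merges last. The top of the picture is then the web $1,k\to k+1\to k,1$. Remark~\ref{rem:braiding} rewrites this as $(-q)^{-k}$ times a crossing plus $q^{-1}$ times a ladder with a $(k-1)$-rung.
\begin{itemize}
\item The ladder term contains a sideway digon on the $(k-1)$-strand, and gives $q^{-1}\lambda_{k-1}$ by induction and digon removal.
\item The crossing term, after the crossed-split relation at the end of Remark~\ref{rem:braiding} (factor $(-1)^{k-1}q^{2k-2}$), becomes a kink on a $1$-strand of value $\eta_1=a^{-1}q=\mu_1-q\lambda_1$. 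It contributes exactly $-a^{-1}q^{k-1}$.
\end{itemize}

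\textbf{Your ``cleaner alternative'' reaches for the right tools but does not close as proposed.} Expanding the double braiding of $k$ and $1$ with Remark~\ref{rem:braiding} gives $A\,\mathrm{id}+B\,H$, with $A,B$ free of $a$ and $H$ the web $k,1\to k+1\to k,1$. Its partial trace is therefore $A\mu_1+B\lambda_k$. But the left-hand side, a $1$-coloured meridian around the $k$-strand, is a new unknown. Lemma~\ref{lemma:curlHomfly} applied to the cabled $(k+1)$-strand only gives $\eta_{k+1}=\eta_1\eta_kq^{4k}$. That is an $a$-free eigenvalue on the $(k+1)$-summand, and it says nothing about $\lambda_k$. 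The kink has to live on a $1$-strand split off from the $k$-strand, which is exactly what the paper's argument arranges.
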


\begin{proof}
We prove the equality for the right digon removal, the left case is symmetric. Applying \cref{eq:updigon} and \cref{eq:triwebs}, we get
\[
\begin{tikzpicture}[anchorbase,semithick,decoration={
    markings,
    mark=at position 0.5 with {\scriptsize{\arrow{>}}}}]
\begin{scope}
\draw[->] (0,0)node[below] {\scriptsize{$k$}} --(0,2);
\draw[->] (0,1.2) to[out=90,in=90] (0.5,1.2) -- (0.5,0.8) node[midway,right] {\scriptsize{$1$}};
\draw (0.5,0.8) to[out=-90,in=-90] (0,0.8);

\node at(1.3,1) {$=$};
\end{scope}
\begin{scope}[shift={(2.7,0)}]
\node at(-0.6,1) {$\dfrac{q^{k-1}}{\qn{k}}$};
\draw (0,0)node[below] {\scriptsize{$k$}}--(0,0.2);
\draw (0,0.2)to[out=40,in=-40] (0,0.6);
\draw (0,0.2)to[out=140,in=220] (0,0.6);
\node at(0.5,0.4) {\scriptsize{$k-1$}};
\node at(-0.2,0.4) {\scriptsize{$1$}};
\draw[->] (0,0.6)--(0,2);
\draw[->] (0,1.5) to[out=90,in=90] (0.5,1.5) -- (0.5,1.1) node[midway,right] {\scriptsize{$1$}};
\draw (0.5,1.1) to[out=-90,in=-90] (0,1.1);
\node at(1.3,1) {$=$};
\end{scope}
\begin{scope}[shift={(6,0)}]
\node at(-1,1) {$\dfrac{q^{k-1}}{\qn{k}}$};
\draw[->] (0,0)node[below] {\scriptsize{$k$}}--(0,2);
\draw[->] (0,1.5) to[out=90,in=90] (0.5,1.5) -- (0.5,0.6) node[midway,right] {\scriptsize{$1$}};
\draw (0.5,0.6) to[out=-90,in=-90] (0,0.6);
\draw[->] (0,0.2) to[out=90,in=-90] (-0.5,0.4)node[below,left] {\scriptsize{$1$}} -- (-0.5,1);
\draw (-0.5,1) to[out=90,in=-90] (0,1.3);
\end{scope}
\end{tikzpicture}
\]
\noindent Then we can apply the braiding relation to the upper part of this last web (see Remark \ref{rem:braiding}).
\[
\begin{tikzpicture}[anchorbase,semithick,decoration={
    markings,
    mark=at position 0.5 with {\scriptsize{\arrow{>}}}}]
\begin{scope}
\draw[->] (0,0)node[below] {\scriptsize{$k$}} --(0,2);
\draw[->] (0,1.2) to[out=90,in=90] (0.5,1.2) -- (0.5,0.8) node[midway,right] {\scriptsize{$1$}};
\draw (0.5,0.8) to[out=-90,in=-90] (0,0.8);
\node at(1.3,1) {$=$};
\end{scope}
\begin{scope}[shift={(3.2,0)}]
\node at(-1.2,1) {$\dfrac{(-1)^k}{q\qn{k}}$};
\draw (0,0)node[below] {\scriptsize{$k$}}--(0,1.6);
\draw[->] (0,1.8) -- (0,2);
\draw[->] (0,0.2) to[out=90,in=-90] (-0.5,0.4) -- (-0.5,1.4);
\draw (-0.5,1.4) to[out=90,in=90] (0.5,1.4);
\draw[->] (0.5,1.4)--(0.5,0.6)node[midway,right] {\scriptsize{$1$}};
\draw (0.5,0.6) to[out=-90,in=-90] (0,0.6);
\node at(1.4,1) {$+$};
\end{scope}
\begin{scope}[shift={(6.5,0)}]
\node at(-1,1) {$\dfrac{q^{k-2}}{\qn{k}}$};
\draw[->] (0,0)node[below] {\scriptsize{$k$}}--(0,2);
\draw[->] (0,1.4) to[out=90,in=90] (0.5,1.4) -- (0.5,0.6) node[midway,right] {\scriptsize{$1$}};
\draw (0.5,0.6) to[out=-90,in=-90] (0,0.6);
\draw[->] (0,0.2) to[out=90,in=-90] (-0.5,0.4) -- (-0.5,1.6)node[midway,right] {\scriptsize{$1$}};
\draw (-0.5,1.6) to[out=90,in=-90] (0,1.8);
\end{scope}
\end{tikzpicture}
\]
\noindent By induction hypothesis and \cref{eq:updigon}, the term on the right of the sum is equal to $q^{-1}\frac{aq^{-(k-1)} - a^{-1}q^{k-1}}{q-q^{-1}}$. For the first term of the sum, we can use the relation of Remark \ref{rem:braiding}, and then a curl removal. 
\[
\begin{tikzpicture}[anchorbase,semithick,decoration={
    markings,
    mark=at position 0.5 with {\scriptsize{\arrow{>}}}}]
\begin{scope}
\draw (0,0)node[below] {\scriptsize{$k$}}--(0,1.6);
\draw[->] (0,1.8) -- (0,2);
\draw[->] (0,0.2) to[out=90,in=-90] (-0.5,0.4) -- (-0.5,1.4);
\draw (-0.5,1.4) to[out=90,in=90] (0.5,1.4);
\draw[->] (0.5,1.4)--(0.5,0.6)node[midway,right] {\scriptsize{$1$}};
\draw (0.5,0.6) to[out=-90,in=-90] (0,0.6);
\node at(1.4,1) {$=$};
\end{scope}
\begin{scope}[shift={(3,0)}]
\draw (0,0)node[below] {\scriptsize{$k$}} -- (0,1);
\draw[->] (0,1.2)--(0,2);
\draw[->] (0,0.2) to[out=90,in=-90] (-0.5,0.4) -- (-0.5,0.8) node[midway,left] {\scriptsize{$1$}};
\draw (-0.5,0.8) to[out=90,in=90] (0.6,0.8);
\draw (0.6,0.8) to[out=-90,in=-45] (0.28,0.95);
\draw[postaction={decorate}] (0.2,1.2) -- (0,1.8);
\node at(1.3,1) {$=$};
\end{scope}
\begin{scope}[shift={(7.5,0)}]
\node at(-1.6,1) {$(-1)^{k-1}q^{2k-2}$};
\draw (0,0)node[below] {\scriptsize{$k$}} -- (0,0.4);
\draw[postaction={decorate}] (0,0.4) to[out=135,in=-90] (0,1.8);
\node at(-0.5,0.5) {\scriptsize{$k-1$}};
\draw[->] (0,1.8)--(0,2);
\draw (0,0.4) to[out=45,in=-90] (0.4,1.2);
\draw (0.4,1.2) to[out=90,in=90] (0.8,1.2);
\draw (0.8,1.2) to[out=-90,in=-45] (0.5,0.9);
\draw (0.3,1.1) --(0,1.8);
\node at(0.25,0.5) {\scriptsize{$1$}};
\node at(1.3,1) {$=$};
\end{scope}
\begin{scope}[shift={(11,0)}]
\node at(0,1) {$(-1)^{k-1}q^{k}a^{-1}\qn{k}$};
\draw[->] (2,0)node[below] {\scriptsize{$k$}} -- (2,2);
\end{scope}
\end{tikzpicture}
\]
\noindent From this, we finally get the result, as:
\[
-q^{k-1}a^{-1}+q^{-1}\frac{aq^{-(k-1)}-a^{-1}q^{k-1}}{q-q^{-1}}=\frac{aq^{-k}-a^{-1}q^k}{q-q^{-1}}.
\]
\end{proof}

The category above is denoted~$\mathrm{\bf Sp}(\beta)$ in~\cite{QS2}, where it is proven that it does not degenerate (this is a consequence of \cite[Proposition 6.7]{QS2}), and it is studied in details in~\cite{Brundan}. This category is also closely related to Vogel's universal category from~\cite{Vogel}, where the loop is kept as a formal parameter and where non-degeneracy is proved in Proposition 1.6.

A classical fact is that $\End_{\iWeb}(\emptyset)\simeq \Z[q^{\pm 1},a^{\pm 1}][(q-q^{-1})^{-1}]$, which allows to define a (framed) knot invariant by considering a knot diagram into this endomorphism space.

\begin{definition}
  Given $K$ a framed knot presented by a diagram $D$, the value:
  \[
  [K]_a:=[D]_a\in \End_{\iWeb}(\emptyset)\simeq \Z[q^{\pm 1},a^{\pm 1}][(q-q^{-1})^{-1}]
  \]
  is the HOMFLY-PT polynomial\footnote{Strictly speaking, this is not a polynomial but rather a rational fraction.} of $K$.
\end{definition}

Figure~\ref{fig:HOMFLY-PT} shows more direct computation rules for the HOMFLY-PT polynomial, which follow from the definition.
\begin{figure}[H]
\fbox{
\begin{minipage}{.45\textwidth}
\begin{center}
    {\bf HOMFLY-PT}
\end{center}
\begin{itemize}
    \item unknot evaluation:
    \[\begin{tikzpicture}[anchorbase] \draw [semithick] (0,0) circle (.5); \end{tikzpicture}=\mu;\]
    \item Conway skein relation:
    \[q^{-1}\begin{tikzpicture}[anchorbase] \draw [semithick,->] (0,0) -- (1,1); \draw [semithick] (1,0) -- (.6,.4); \draw [semithick,->] (.4,.6) -- (0,1); \end{tikzpicture}- q\begin{tikzpicture}[anchorbase] \draw [semithick,->] (1,0) -- (0,1); \draw [semithick] (0,0) -- (.4,.4); \draw [semithick,->] (.6,.6) -- (1,1); \end{tikzpicture}=(q^{-1}-q) \begin{tikzpicture}[anchorbase] \draw [semithick, ->] (0,0) to [out=45,in=-45] (0,1); \draw [semithick, ->] (1,0) to [out=135,in=-135] (1,1); \end{tikzpicture};
    \]
    \item curls: 
    \[
    \begin{tikzpicture}[anchorbase] \draw [semithick] (0,0) -- (0,.35); \draw [semithick] (.1,.6) to [out=80,in=180] (.3,.8) to [out=0,in=90] (.6,.5) to [out=-90,in=0] (.3,.2) to [out=180,in=-90] (0,.5) -- (0,1); \end{tikzpicture}=q^{-1}a\;\begin{tikzpicture}[anchorbase] \draw [semithick] (0,0) -- (0,1); \end{tikzpicture},\quad \begin{tikzpicture}[anchorbase] \draw [semithick] (0,1) -- (0,.65); \draw [semithick] (.1,.4) to [out=-80,in=180] (.3,.2) to [out=0,in=-90] (.6,.5) to [out=90,in=0] (.3,.8) to [out=180,in=90] (0,.5) -- (0,0); \end{tikzpicture}=qa^{-1}\;\begin{tikzpicture}[anchorbase] \draw [semithick] (0,0) -- (0,1);
    \end{tikzpicture};
    \]
    \item relation between variables: 
    \[
    (q-q^{-1})\mu=a-a^{-1}.
    \]
\end{itemize}
\end{minipage}
}
\caption{Computing the HOMFLY-PT polynomial}
\label{fig:HOMFLY-PT}
\end{figure}
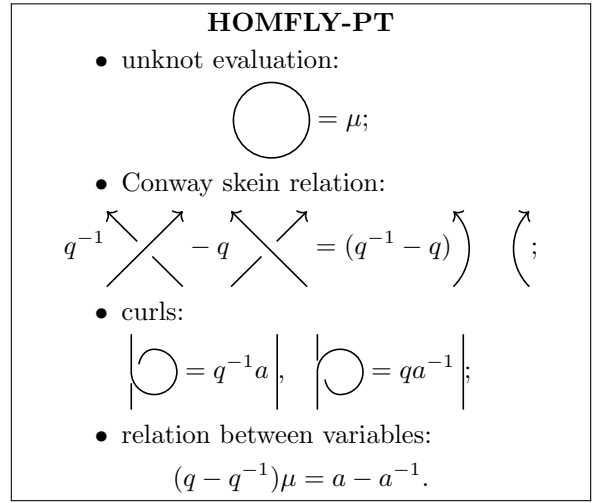

\subsection{Specializations and Reshetikhin-Turaev polynomials}

A key feature of the HOMFLY-PT polynomial that was already mentioned is that it specializes to the Reshetikhin-Turaev $U_q(\mathfrak{gl}_n)$ invariants. This also has a version in terms of webs. Let us first define the relevant categories.

\begin{definition}\label{def:nWeb}
  We define $\nWeb^{\leq n}$ to be the braided, pivotal category over the ring $\Z[q^{\pm 1}]$ generated by $\iWeb^+$, with additional relations:
  \begin{itemize}
    \item circle evaluation:
    \begin{equation}
    \label{eq:unknot_gln}
\begin{tikzpicture}[anchorbase]
\draw [semithick, ->] (0,0) arc (0:360:.5) ;
\node at (.2,-.3) {$k$};
\end{tikzpicture}
\; =\;
\begin{tikzpicture}[anchorbase]
\draw [semithick, ->] (0,0) arc (0:-360:.5) ;
\node at (.2,-.3) {$k$};
\end{tikzpicture}
\; = \;
q^{k(n-k)}\qbin{k}{n}.
\end{equation}
\item sideway digon removal:
\begin{equation}
    \label{eq:sideway_digon_1_n}
    \begin{tikzpicture}[anchorbase]
    \draw [semithick,->] (0,0) -- (0,3) ;
    \draw [semithick,->] (0,2) to [out=90,in=90] (1,2) -- (1,1.5);
    \draw [semithick] (1,1.5) -- (1,1) to [out=-90,in=-90] (0,1);
    \node at (0,-.3) {$1$};
    \node at (0,3.3) {$1$};
    \node at (1.3,1.5) {$1$};
    \node at (-.3,1.5) {$2$}; 
    \end{tikzpicture}
    \; = \;
    \begin{tikzpicture}[anchorbase]
    \draw [semithick,->] (0,0) -- (0,3) ;
    \draw [semithick,->] (0,2) to [out=90,in=90] (-1,2) -- (-1,1.5);
    \draw [semithick] (-1,1.5) -- (-1,1) to [out=-90,in=-90] (0,1);
    \node at (0,-.3) {$1$};
    \node at (0,3.3) {$1$};
    \node at (-1.3,1.5) {$1$};
    \node at (.3,1.5) {$2$}; 
    \end{tikzpicture}
    \; = \;
q^{2-n}\qn{n-1}.
    \begin{tikzpicture}[anchorbase]
    \draw [semithick,->] (0,0) -- (0,3) ;
    \node at (0,-.3) {$1$};
    \node at (0,3.3) {$1$};
    \end{tikzpicture}
\end{equation}
\item square removal (together with its mirror image):
\begin{equation}
    \label{eq:mixed_square_QS_n}
    \begin{tikzpicture}[anchorbase,decoration={
    markings,
    mark=at position 0.5 with {\arrow{>}}}]
    \draw[semithick, postaction={decorate}] (0,0) to [out=45,in=180] (.5,.5);
    \draw [semithick ,postaction={decorate}] (.5,.5) -- (1.5,.5);
    \draw [semithick,postaction={decorate}] (.5,1.5) to [out=180,in=180] (.5,.5);
    \draw [semithick, postaction={decorate}] (1.5,1.5) -- (.5,1.5);
    \draw [semithick,postaction={decorate}] (1.5,.5) to [out=0,in=0] (1.5,1.5);
    \draw [semithick,->] (1.5,.5) to [out=0,in=135] (2,0);
    \draw [semithick, postaction={decorate}] (2,2) to [out=-135,in=0] (1.5,1.5);
    \draw [semithick, ->] (.5,1.5) to [out=185,in=-45] (0,2);
    \node at (-.2,-.2) {$1$};
    \node at (-.2,2.2) {$1$};
    \node at (2.2,-.2) {$1$};
    \node at (2.2,2.2) {$1$};
    \node at (1,.2) {$2$};
    \node at (1,1.8) {$2$};
    \end{tikzpicture}
    \; = \;
    \begin{tikzpicture}[anchorbase]
    \draw [semithick, ->] (0,0) to [out=45,in=-45] (0,2);
    \draw [semithick,->] (2,2) to [out=-135,in=135] (2,0);
    \end{tikzpicture}
    \; + \;
    q^{3-n}\qn{n-2}\;
    \begin{tikzpicture}[anchorbase]
    \draw [semithick, ->] (0,0) to [out=45,in=135] (2,0);
    \draw [semithick,->] (2,2) to [out=-135,in=-45] (0,2);
    \end{tikzpicture}
\end{equation}
\item
  \begin{equation}
    \begin{tikzpicture}[anchorbase]
      \draw [semithick,->] (0,0) -- (0,1);
      \node at (0,-.3) {$k>n$};
      \node at (0,.3) {\vphantom{$k>n$}};
    \end{tikzpicture}
    =0.
    \label{eq:nbounded}
  \end{equation}
  \end{itemize}

  The knot invariant $[K]_n$ associated to this category is the so-called Reshetikhin-Turaev $U_q(\mathfrak{gl}_n)$ polynomial.
\end{definition}

It is a classical computation to check the following.

\begin{lemma}\label{lem:nWeb}
  The category $\nWeb^{\leq n}$ is a quotient by~\cref{eq:nbounded} of the $a=q^n$ specialization of the category $\iWeb$, and the same holds true for the associated invariants:
  \[
  [K]_n=[K]_a(a=q^n,q).
  \]
\end{lemma}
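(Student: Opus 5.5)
The plan is to build a functor $F\colon \iWeb|_{a=q^n} \to \nWeb^{\leq n}$ that is the identity on generators. We then show it is full and surjective on objects, and that its kernel is the tensor ideal generated by~\cref{eq:nbounded}. Both categories are generated, as braided pivotal categories, by $\iWeb^+$ together with cups and caps. So it suffices to check that each defining relation of $\iWeb$, specialized at $a=q^n$, holds in $\nWeb^{\leq n}$. Conversely, we check that the relations \cref{eq:unknot_gln}, \cref{eq:sideway_digon_1_n}, \cref{eq:mixed_square_QS_n} hold in the specialization. Then the only extra relation in $\nWeb^{\leq n}$ is~\cref{eq:nbounded}, which gives the quotient statement.

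The key step is comparing coefficients. With the paper's normalization, $q^{m}\cdot\frac{q^{n-m}-q^{m-n}}{q-q^{-1}}$ equals $q^{1-(n-m)}\qn{n-m}$ up to the conventions. Indeed
\[
\frac{aq^{-m}-a^{-1}q^{m}}{q-q^{-1}}\Big|_{a=q^n}=\frac{q^{n-m}-q^{m-n}}{q-q^{-1}}=q^{m-n+1}\qn{n-m}.
\]
Taking $m=0$ gives the unknot value $q^{1-n}\qn{n}=q^{n-1}\qbin{1}{n}\cdot q^{2-2n}$. Here one must reconcile this with $q^{n-1}\qbin{1}{n}$ in~\cref{eq:unknot_gln}. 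I expect this to be a matter of the framing/normalization conventions, which I will track carefully; if needed the invariants agree only after adjusting by the curl factor of Lemma~\ref{lemma:curlHomfly} at $a=q^n$. Taking $m=1$ gives exactly $q^{2-n}\qn{n-1}$, matching~\cref{eq:sideway_digon_1_n}. Taking $m=2$ gives $q^{3-n}\qn{n-2}$, matching~\cref{eq:mixed_square_QS_n}. The colored circles $\mu_k$ at $a=q^n$ telescope, via the footnote formula, to $q^{k(n-k)}\qbin{k}{n}$. This is a routine product manipulation; alternatively, the circles are determined by $\mu_1$ together with the $\iWeb^+$ relations in both categories, so no separate check is needed. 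The braidings agree because both are given by the same formula~\eqref{eq:braiding} inside $\iWeb^+$.

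For the invariants, a link diagram $D$ is sent by both constructions to an element of $\End(\emptyset)$ through the same braiding and pivotal structure. Then $F$ maps $[D]_a|_{a=q^n}$ to $[D]_n$. Since $F$ is the identity on scalars, and the quotient by~\cref{eq:nbounded} does not kill $\End(\emptyset)$ (non-degeneracy of the Reshetikhin--Turaev category, via the representation-theoretic interpretation), we get $[K]_n=[K]_a(a=q^n,q)$. The curl coefficients match as well: $qa^{-1}\mapsto q^{1-n}$.

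The main obstacle is the unknot normalization mismatch noted above, together with confirming that the specialization of the denominator ring $[(q-q^{-1})^{-1}]$ causes no issue. I would first check it directly on the Conway skein relation of Figure~\ref{fig:HOMFLY-PT}. That relation together with the unknot value determines both invariants, so it reduces the invariant statement to a single scalar identity.
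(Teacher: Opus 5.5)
Your strategy is the natural one: specialize $a=q^n$, match the two presentations relation by relation, then use that scalars act faithfully on $\End_{\nWeb^{\leq n}}(\emptyset)$. The paper gives no proof of this lemma, but this is exactly how it argues for Proposition~\ref{prop:xWeb_presentation}. Your identity $\frac{aq^{-m}-a^{-1}q^{m}}{q-q^{-1}}\big|_{a=q^n}=q^{1-n+m}\qn{n-m}$ is correct, and it settles \cref{eq:sideway_digon_1_n} and \cref{eq:mixed_square_QS_n}.

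The gap is the circle relation. You propose to absorb the mismatch between $q^{1-n}\qn{n}$ and $q^{n-1}\qn{n}$ into framing or curl factors. That cannot work: a crossingless circle gives an identity of scalars in $\End(\emptyset)$, and no curl enters. Moreover, the lemma asserts the on-the-nose equality $[K]_n=[K]_a(a=q^n,q)$, so agreement only up to normalization would not prove it. Your claim that the $\mu_k$ telescope to $q^{k(n-k)}\qbin{k}{n}$ is also false, and it contradicts your own $k=1$ computation. Since $\qn{j}=q^{j-1}\frac{q^j-q^{-j}}{q-q^{-1}}$, the footnote formula gives
\[
\mu_k\big|_{a=q^n}=\prod_{i=0}^{k-1}\frac{q^{n-i}-q^{i-n}}{q^{i+1}-q^{-i-1}}=q^{-k(n-k)}\qbin{k}{n}.
\]

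The missing observation is that the exponent in \cref{eq:unknot_gln} must be $-k(n-k)$; the printed sign is a misprint, not a convention. This corrected value is what \cref{eq:unknot} gives at $x=n$, where $\nu_n=q^{1-n}$. With the printed sign, the relations of $\nWeb^{\leq n}$ are not even consistent for $n\geq 2$. Set $d=q^{2-n}\qn{n-1}$. Closing the crossings of Remark~\ref{rem:braiding} produces kinks $\mu_1-qd$ and $\mu_1-q^{-1}d$. These must be mutually inverse, by the framed Reidemeister moves (compare Lemma~\ref{lemma:curlHomfly}). That forces $\mu_1\in\{q^{1-n}\qn{n},\,q^{3-n}\qn{n-2}\}$, which excludes $q^{n-1}\qn{n}$.

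Once the circle values are corrected, all coefficients match. Your remark that higher circles need no separate check then becomes valid, because the imposed values agree with the recursively derived ones. The rest of your plan goes through:
\begin{itemize}
\item the braiding formulas are identical;
\item the curl values match, since $qa^{-1}\mapsto q^{1-n}$;
\item scalars act injectively on $\End(\emptyset)$, by the Cautis--Kamnitzer--Morrison description via $U_q(\mathfrak{gl}_n)$-representations;
\item a base change to $\Z[q^{\pm1}][(q-q^{-1})^{-1}]$ makes the two coefficient rings agree.
\end{itemize}
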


Interestingly, one can also define the category $\nWeb$ as the specialization of $\iWeb$ without imposing~\cref{eq:nbounded}. This category is the inverse limit of the $U_q(\mathfrak{gl}_{k+n|k})$ representation categories, and controls the same knot invariant (but richer tangle invariants).

Usually, the quantum integers appear as the value of the $\mathfrak{gl}_n$ invariant for the unknot. Here, because of our modified conventions for $q$-integers, the $q$-integers appear as the invariant for the stabilized unknot, as shown below.

\begin{lemma} \label{lem:qint_sunknot}
We have the following equality:
\[
\left[
\begin{tikzpicture}[anchorbase]
\draw[thick] (.1,.1) to [out=45,in=90] (.8,0) to [out=-90,in=-45] (0,0) to [out=135,in=90] (-.8,0) to [out=-90,in=-135] (-.1,-.1);
\end{tikzpicture}
\right]_n=\qn{n}
\]
\end{lemma}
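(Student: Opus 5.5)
The diagram is a planar unknot with one kink, i.e. a circle carrying a single curl. So the plan is to remove the curl with the curl rule and then evaluate the circle with \cref{eq:unknot_gln}. By Lemma~\ref{lem:nWeb} it is equivalent, and convenient, to work in $\iWeb$ and specialize $a=q^n$ at the end.

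First step: identify which curl appears. The picture shows a strand passing over at the crossing, and we must decide which of the two curls of Lemma~\ref{lemma:curlHomfly} (case $k=1$) it is. In the HOMFLY-PT rules of Figure~\ref{fig:HOMFLY-PT}, the curls have coefficients $q^{-1}a$ and $qa^{-1}$. Specializing $a=q^n$, these become $q^{n-1}$ and $q^{1-n}$.

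Second step: evaluate the circle. After removing the curl we are left with a $1$-labelled circle. By \cref{eq:unknot_gln} with $k=1$ it equals $q^{n-1}\qbin{1}{n}=q^{n-1}\qn{n}$; equivalently $\mu_1=\frac{q^n-q^{-n}}{q-q^{-1}}$ at $a=q^n$.

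Third step: combine. The product is either $q^{1-n}\cdot q^{n-1}\qn{n}=\qn{n}$ or $q^{n-1}\cdot q^{n-1}\qn{n}$. The claimed equality therefore holds exactly when the pictured stabilization is the curl with coefficient $qa^{-1}$, namely the second curl of Figure~\ref{fig:HOMFLY-PT}. This normalization is precisely what motivates the convention $\qn{n}=\frac{q^{2n}-1}{q^2-1}=q^{n-1}[n]$.

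The only real obstacle is bookkeeping: matching the drawn crossing, together with whichever orientation one chooses, to the correct twist in Lemma~\ref{lemma:curlHomfly}. An unoriented diagram is fine, because reversing the orientation of a single-component curl does not change its crossing sign. The circle evaluation itself is also orientation-independent by \cref{eq:unknot_gln}. I would check the sign directly: resolve the crossing via Remark~\ref{rem:braiding} for $k=1$ and compare with the base-case computation in the proof of Lemma~\ref{lemma:curlHomfly}. That computation gives $\eta_1=a^{-1}q$ for the curl, matching the required factor $q^{1-n}$.
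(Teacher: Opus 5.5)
There is a genuine error. Your route (remove the kink, evaluate the circle, set $a=q^n$) is the paper's, but both numerical inputs are wrong, and the correct final answer comes only from their cancelling.

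\textbf{The circle value.} You take $q^{n-1}\qn{n}$ from \cref{eq:unknot_gln} and call it ``equivalently'' $\mu_1|_{a=q^n}=\frac{q^n-q^{-n}}{q-q^{-1}}$. These are not equal: $\frac{q^n-q^{-n}}{q-q^{-1}}=q^{1-n}\qn{n}$, so they differ by a factor $q^{2n-2}$. Since $\nWeb$ is the $a=q^n$ specialization of $\iWeb$ (Lemma~\ref{lem:nWeb}), the $1$-labelled circle must be $q^{1-n}\qn{n}$. This also matches \cref{eq:unknot} at $x=n$, where $\nu_n=q^{1-n}$. So the exponent in \cref{eq:unknot_gln} is inconsistent with the rest of the paper and should read $-k(n-k)$. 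You wrote down $\mu_1$ but never checked that it agrees with the displayed formula.

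\textbf{The kink.} You chose the curl by asking which one makes the answer come out to $\qn{n}$. That is circular, and with the wrong circle value it picks the wrong curl. Read the diagram instead. The unbroken strand passes through the crossing from south-east to north-west, and the broken (under) strand from south-west to north-east. So the crossing is negative, for either orientation.

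In the conventions of Figure~\ref{fig:HOMFLY-PT}, a positive kink gives $qa^{-1}$ and a negative kink gives $q^{-1}a$. This is forced by applying the Conway relation to a kink, since $q^{-1}\cdot qa^{-1}-q\cdot q^{-1}a=(q^{-1}-q)\mu$. Hence the drawn kink contributes $q^{-1}a\mapsto q^{n-1}$. The curl in the base case of Lemma~\ref{lemma:curlHomfly}, with $\eta_1=a^{-1}q$, is the positive kink, the mirror of the one drawn. So the check you defer to your last paragraph would contradict your choice.

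Indeed, if you carry out your choice consistently in $\iWeb$, as you proposed, you get $qa^{-1}\mu_1\mapsto q^{2-2n}\qn{n}\neq\qn{n}$. The correct computation, which is the paper's proof, is
\[
q^{-1}a\cdot\frac{a-a^{-1}}{q-q^{-1}}=\frac{a^2-1}{q^2-1}\xrightarrow{a=q^n}\frac{q^{2n}-1}{q^2-1}=\qn{n}.
\]
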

\begin{proof}
  We will first compute the value of the HOMFLY-PT polynomial for this stabilized unknot, before specializing it. If follows from Figure~\ref{fig:HOMFLY-PT} that:
  \[
  \left[
\begin{tikzpicture}[anchorbase]
\draw[thick] (.1,.1) to [out=45,in=90] (.8,0) to [out=-90,in=-45] (0,0) to [out=135,in=90] (-.8,0) to [out=-90,in=-135] (-.1,-.1);
\end{tikzpicture}
\right]_a=\frac{a^2-1}{q^2-1}.
  \]
  Now, replacing $a^2=q^{2n}$ in the fraction above, we get:
  \[
  \frac{q^{2n}-1}{q^2-1}=\qn{n}.
  \]
  \end{proof}

\section{Interpolating web categories} \label{sec:interpolating}

With these categories in place, we are interested in interpolating the natural functors
\[\iWeb~\rightarrow~\nWeb^{\leq n}.\]
We will prove in Proposition~\ref{prop:finite_quotients} that~\cref{eq:nbounded} has no analogues in the case of non-integral parameters, and we are rather led to interpolate the functors $\iWeb\rightarrow \nWeb$.

\subsection{Definition and presentation}

Before proceeding to the definition, we need to introduce some notation. In the usual $\mathfrak{gl}_n$ case, when trying to handle knot invariance, one often has to rescale polynomials by powers of $q$. It turns out that in our more general setting, the rescalings cease to be just monomials, and we recall from~\cref{not:delta_x} the quantity:
\[
\delta_x=\qn{x}-\qn{x-1}.
\]
Although not strictly necessary, it will also be convenient to extract a square root of (the inverse of) $\delta_x$, and we will thus work over the quadratic extension $\Z(q)[\nu_x]$ with:
\begin{equation}
    \label{eq:nux}
    \nu_x^2=\delta_x^{-1} .
\end{equation}

Notice that when $x=n$, $\delta_x=q^{2n-2}$ and $\nu_x=q^{1-n}$.

\begin{definition} \label{def:xWeb}
We define $\xWeb$ to be the $\Z(q)[\nu_x]$-linear category obtained by setting $a=q\nu_x\delta_x$ in $\iWeb$.
\end{definition}

The specialization has been chosen purposefully, as illustrated below.
\begin{lemma} \label{lem:qrat_sunknot}
We have the following equality:
\[
\left[
\begin{tikzpicture}[anchorbase]
\draw[thick] (.1,.1) to [out=45,in=90] (.8,0) to [out=-90,in=-45] (0,0) to [out=135,in=90] (-.8,0) to [out=-90,in=-135] (-.1,-.1);
\end{tikzpicture}
\right]_x=\qn{x}.
\]
\end{lemma}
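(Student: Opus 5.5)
Since $\xWeb$ is obtained from $\iWeb$ by specialization, the invariant $[\,\cdot\,]_x$ is just $[\,\cdot\,]_a$ evaluated at $a=q\nu_x\delta_x$. The plan therefore mirrors the proof of Lemma~\ref{lem:qint_sunknot}. We reuse the value already computed there from Figure~\ref{fig:HOMFLY-PT}:
\[
\left[
\begin{tikzpicture}[anchorbase]
\draw[thick] (.1,.1) to [out=45,in=90] (.8,0) to [out=-90,in=-45] (0,0) to [out=135,in=90] (-.8,0) to [out=-90,in=-135] (-.1,-.1);
\end{tikzpicture}
\right]_a=\frac{a^2-1}{q^2-1}.
\]
This comes from the curl rule, which gives $qa^{-1}$ or $q^{-1}a$ times the unknot, combined with $\mu=\frac{a-a^{-1}}{q-q^{-1}}$. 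The first step is to re-check that the curl orientation in the stabilized unknot drawn here is the one that gives $a^2-1$ rather than $1-a^{-2}$, exactly as in the integral case.

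The second step is to substitute $a=q\nu_x\delta_x$. Using $\nu_x^2=\delta_x^{-1}$ from \cref{eq:nux}, we get $a^2=q^2\nu_x^2\delta_x^2=q^2\delta_x$. Hence
\[
\frac{a^2-1}{q^2-1}=\frac{q^2\delta_x-1}{q^2-1}=\frac{q^2\qn{x}-q^2\qn{x-1}-1}{q^2-1}.
\]

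The third step, and the only one with real content, is to simplify this using the shift equivariance \cref{shift_equivariance} applied to $x-1$. It gives $\qn{x}=q^2\qn{x-1}+1$, so $q^2\qn{x-1}=\qn{x}-1$. Hence $q^2\delta_x-1=q^2\qn{x}-\qn{x}=(q^2-1)\qn{x}$, and the fraction equals $\qn{x}$.

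I expect no real obstacle. The only point needing care is the sign and orientation convention in the first step. As a sanity check, at $x=n$ the substitution gives $a^2=q^{2n}$, so the result agrees with Lemma~\ref{lem:qint_sunknot}.
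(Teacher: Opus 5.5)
Your proposal is correct and matches the paper's proof step for step. It starts from the HOMFLY-PT value $\frac{a^2-1}{q^2-1}$, substitutes $a^2=q^2\nu_x^2\delta_x^2=q^2\delta_x$, and uses $\qn{x}=q^2\qn{x-1}+1$ to reduce the fraction to $\qn{x}$; your explicit check that the $q^{-1}a$ curl is the one producing $a^2-1$ is a detail the paper leaves implicit.
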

\begin{proof}
Just as in the proof of \cref{lem:qint_sunknot}, we start from the value of the HOMFLY-PT polynomial for the stabilized unknot.
  \[
  \left[
\begin{tikzpicture}[anchorbase]
\draw[thick] (.1,.1) to [out=45,in=90] (.8,0) to [out=-90,in=-45] (0,0) to [out=135,in=90] (-.8,0) to [out=-90,in=-135] (-.1,-.1);
\end{tikzpicture}
\right]_a=\frac{a^2-1}{q^2-1}.
  \]
  Now, replacing $a^2=(q\nu_x\delta_x)^2=q^2\nu_x^2\delta_x^2=q^2\delta_x$ in the fraction above, we get, using Equation~\ref{shift_equivariance}:
  \[
  \frac{q^2\delta_x-1}{q^2-1}=\frac{q^2\qn{x}-q^2\qn{x-1}-1}{q^2-1}=\frac{q^2\qn{x}-\qn{x}}{q^2-1}=\qn{x}.
  \]
  \end{proof}

More generally, the value of colored stabilized unknots will be of interest. We start by computing the effect of a twist.
\begin{lemma}
Colored twists act as follows:
\[
    \begin{tikzpicture}[anchorbase]
    \draw [semithick] (0,1) -- (0,.65);
    \draw [semithick] (.1,.4) to [out=-80,in=180] (.3,.2) to [out=0,in=-90] (.6,.5) to [out=90,in=0] (.3,.8) to [out=180,in=90] (0,.5) -- (0,0);
        \node at (0,-.3) {$k$};
    \node at (0,1.3) {\vphantom{$k$}};
\end{tikzpicture}
=\nu_x^kq^{2k(k-1)}\;
    \begin{tikzpicture}[anchorbase]
    \draw [semithick] (0,0) -- (0,1);
    \node at (0,-.3) {$k$};
    \node at (0,1.3) {\vphantom{$k$}};
    \end{tikzpicture},\quad
\begin{tikzpicture}[anchorbase]
    \draw [semithick] (0,0) -- (0,.35);
    \draw [semithick] (.1,.6) to [out=80,in=180] (.3,.8) to [out=0,in=90] (.6,.5) to [out=-90,in=0] (.3,.2) to [out=180,in=-90] (0,.5) -- (0,1);
    \node at (0,-.3) {$k$};
    \node at (0,1.3) {\vphantom{$k$}};
    \end{tikzpicture}=\nu_x^{-k}q^{-2k(k-1)}\;
    \begin{tikzpicture}[anchorbase]
    \draw [semithick] (0,0) -- (0,1);
    \node at (0,-.3) {$k$};
    \node at (0,1.3) {\vphantom{$k$}};
    \end{tikzpicture}.
\]
\end{lemma}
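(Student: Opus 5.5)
The plan is to specialize \cref{lemma:curlHomfly}, since $\xWeb$ is by \cref{def:xWeb} the specialization $a=q\nu_x\delta_x$ of $\iWeb$. Every relation of $\iWeb$ holds in $\xWeb$ after this substitution, and in particular the colored twist relation does. So the first twist acts by the scalar $a^{-k}q^{k(2k-1)}$ evaluated at $a=q\nu_x\delta_x$, and the second twist acts by its inverse $a^kq^{-k(2k-1)}$. The proof therefore reduces to simplifying this scalar in $\Z(q)[\nu_x]$.

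Next I use the defining relation \cref{eq:nux}, namely $\nu_x^2=\delta_x^{-1}$. It gives $\nu_x\delta_x=\nu_x\cdot\nu_x^{-2}=\nu_x^{-1}$, hence $a=q\nu_x^{-1}$ and $a^{-1}=q^{-1}\nu_x$. Here $\nu_x$ is invertible because $\delta_x$ is a nonzero element of $\Q(q)$. Substituting,
\[
a^{-k}q^{k(2k-1)}=\nu_x^{k}q^{-k}q^{2k^2-k}=\nu_x^kq^{2k^2-2k}=\nu_x^kq^{2k(k-1)},
\]
which is the first formula. The second formula is the inverse scalar, $a^kq^{-k(2k-1)}=\nu_x^{-k}q^{-2k(k-1)}$.

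There is no real obstacle. The one point to check is the convention mismatch in \cref{lemma:curlHomfly}: its proof gives $\eta_1=a^{-1}q$, and the recursion $\eta_k=\eta_1\eta_{k-1}q^{4(k-1)}$ solves to $\eta_k=a^{-k}q^{k+2k(k-1)}=a^{-k}q^{k(2k-1)}$, which matches the stated coefficient. As a sanity check, at $x=n$ we have $\nu_n=q^{1-n}$, and for $k=1$ the formula gives $\nu_n=q^{1-n}$, consistent with $a^{-1}q=q^{-n}q$.
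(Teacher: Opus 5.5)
Your proposal is correct and matches the paper's proof: the lemma is obtained by specializing Lemma~\ref{lemma:curlHomfly} and simplifying with $\nu_x^2=\delta_x^{-1}$, which gives $a=q\nu_x\delta_x=q\nu_x^{-1}$ and hence $a^{-k}q^{k(2k-1)}=\nu_x^kq^{2k(k-1)}$. You use the correct substitution $a=q\nu_x\delta_x$ from Definition~\ref{def:xWeb}, whereas the paper's one-line proof writes $a=\nu_x\delta_x$; that drops the factor $q$ and would not give the stated exponent $2k(k-1)$.
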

\begin{proof}
This is a direct consequence of the specialization $a = \nu_x\delta_x$ in \cref{lemma:curlHomfly}.
\end{proof}
The same proof as the one we wrote for \cref{lem:qint_sunknot} can be used to make binomial coefficients appear as follows.
\begin{lemma}\label{lem:qrat_sunknot_col}
We have the following equality:
\[
\left[
\begin{tikzpicture}[anchorbase]
\draw[thick] (.1,.1) to [out=45,in=90] (.8,0) to [out=-90,in=-45] (0,0) to [out=135,in=90] (-.8,0) to [out=-90,in=-135] (-.1,-.1);
\node at (1,-.2) {$k$};
\end{tikzpicture}
\right]_x=q^{k(k-1)}\qbin{k}{x}.
\]
\end{lemma}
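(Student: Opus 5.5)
The plan is to follow the proof of \cref{lem:qint_sunknot}/\cref{lem:qrat_sunknot}: compute the stabilized $k$-colored unknot in $\iWeb$ as a function of $a$, then specialize at $a=q\nu_x\delta_x$. The stabilized unknot is a closed loop carrying one curl, so isotoping the curl off, it equals the colored circle $\mu_k$ times the twist coefficient of \cref{lemma:curlHomfly}. Take the curl of the stabilized unknot in the figure to be the negative one, with coefficient $a^{-k}q^{k(2k-1)}$; this is the curl for which the $k=1$ case recovers $\frac{a^2-1}{q^2-1}$. The other curl gives the mirror value, and the orientation convention is fixed by requiring agreement at $k=1$.

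The first step is to evaluate
\[
[\,\cdot\,]_a = a^{-k}q^{k(2k-1)}\mu_k,\qquad \mu_k=\prod_{j=0}^{k-1}\frac{aq^{-j}-a^{-1}q^{j}}{q-q^{-1}}\cdot\frac{q^{k(k-1)/2}}{\qn{k}!}.
\]
Next, rewrite each factor:
\[
\frac{aq^{-j}-a^{-1}q^j}{q-q^{-1}}=\frac{a^{-1}q^{j+1}\,(a^2q^{-2j}-1)}{q^2-1}.
\]
Multiplying over $j$ gives $a^{-k}q^{k(k+1)/2}\prod_{j}\frac{a^2q^{-2j}-1}{q^2-1}$. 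Together with the prefactor $a^{-k}$ from the curl this produces $a^{-2k}$. One wants the total power of $a$ to cancel, so the correct curl is presumably the one with $a^{k}$, namely $a^kq^{-k(2k-1)}$. This sign issue is the first thing to check, against the case $k=1$, which must give $\frac{a^2-1}{q^2-1}$. With $a^{k}$ the powers of $a$ cancel, and we are left with a power of $q$ times $\prod_j\frac{a^2q^{-2j}-1}{q^2-1}/\qn{k}!$.

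Then substitute $a^2=q^2\delta_x$, as computed in the proof of \cref{lem:qrat_sunknot}. The key identity to establish is
\[
\frac{q^{2}\delta_x q^{-2j}-1}{q^2-1}=q^{-2j}\qn{x-j}\quad\text{for every } j.
\]
This is essentially the content of the shift equivariance \cref{shift_equivariance}. Writing $\qn{x-j}$ by iterating $\qn{y+1}=q^2\qn{y}+1$ gives $\qn{x}=q^{2j}\qn{x-j}+\qn{j}$, and similarly $\qn{x-1}=q^{2j}\qn{x-j}+\qn{j}-q^{2j-2}\cdots$. More cleanly, $\delta_{y+1}=q^2\delta_y$, so $\delta_x=q^{2j}\delta_{x-j}$. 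The $j=0$ computation in \cref{lem:qrat_sunknot} applied to $x-j$ then gives $\frac{q^2\delta_{x-j}-1}{q^2-1}=\qn{x-j}$. Hence $\frac{q^2q^{-2j}\delta_x-1}{q^2-1}=\qn{x-j}$ exactly, with no $q^{-2j}$ factor, which is even simpler.

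Finally, the product gives $\qn{x}\qn{x-1}\cdots\qn{x-k+1}/\qn{k}!=\qbin{k}{x}$, times a monomial in $q$. The main obstacle is bookkeeping: collecting the $q$-exponents from the curl, from the $q^{k(k-1)/2}$ in $\mu_k$, and from the factors $q^{j+1}$ and $q^{-j}$, and checking that they total $q^{k(k-1)}$. As a sanity check, take $x=n$, where $\nWeb$ should give $q^{k(k-1)}\qbin{k}{n}$ from $q^{k(n-k)}\qbin{k}{n}$ times the twist $q^{-2k(k-1)}\nu_n^{-k}=q^{-2k(k-1)+k(n-1)}$. Any mismatch there would indicate that the twist coefficients in the previous lemma need adjusting, for instance that the $q$-power in \cref{lemma:curlHomfly} is $q^{\pm k(2k-1)}$ versus $q^{\pm 2k(k-1)}$ after specialization. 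I would resolve this by fixing conventions at $k=1$ and matching the integer specialization, which by \cref{lem:nWeb} and density of the integer points determines the rational identity.
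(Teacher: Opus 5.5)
Your route is the one the paper intends; its proof only says ``the same proof as \cref{lem:qint_sunknot}''. You write the stabilized $k$-colored unknot as $\mu_k$ times the colored curl of \cref{lemma:curlHomfly}, substitute $a^2=q^2\delta_x$, and use $\delta_x=q^{2j}\delta_{x-j}$ to get $\frac{a^2q^{-2j}-1}{q^2-1}\mapsto\qn{x-j}$. Your final choice of curl, $a^kq^{-k(2k-1)}$ (forced by $k=1$), and your corrected identity are both right.

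The gap is the step you leave as ``bookkeeping''. You assert that the $q$-exponents total $k(k-1)$ without computing them, and they do not. From your own factorization,
\[
\mu_k\cdot a^kq^{-k(2k-1)} = a^{-k}q^{\frac{k(k+1)}{2}}\cdot q^{\frac{k(k-1)}{2}}\cdot a^kq^{-2k^2+k}\,\frac{\prod_{j=0}^{k-1}\frac{a^2q^{-2j}-1}{q^2-1}}{\qn{k}!} = q^{-k(k-1)}\,\frac{\prod_{j=0}^{k-1}\frac{a^2q^{-2j}-1}{q^2-1}}{\qn{k}!},
\]
so the specialization is $q^{-k(k-1)}\qbin{k}{x}$. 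The same result comes directly from $\xWeb$:
\[
u_k\cdot\nu_x^{-k}q^{-2k(k-1)}=q^{k(k-1)}\nu_x^k\qbin{k}{x}\cdot\nu_x^{-k}q^{-2k(k-1)}=q^{-k(k-1)}\qbin{k}{x}.
\]
Concretely, at $x=2$, $k=2$ one has $u_2=1$ and twist $\nu_2^{-2}q^{-4}=q^{-2}$, giving $q^{-2}$ rather than $q^{2}$. So the argument you set up proves the formula with exponent $-k(k-1)$. The printed exponent is what one gets by dropping the factor $q^{-2k(k-1)}$ from the twist, and no bookkeeping will reach it.

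Your fallback does not repair this. In the $x=n$ check you use the circle value $q^{k(n-k)}\qbin{k}{n}$ from \cref{eq:unknot_gln}. But $\mu_k$ at $a=q^n$, equivalently $u_k$ at $x=n$, is $q^{-k(n-k)}\qbin{k}{n}$; already for $k=1$ it is $q^{1-n}\qn{n}$, not $q^{n-1}\qn{n}$. With the correct value the integer check gives $q^{-k(n-k)}\cdot q^{-2k(k-1)+k(n-1)}=q^{-k(k-1)}$ again.

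The twist coefficients are not the problem. For example, in $\nWeb^{\leq 2}$, \cref{eq:braiding} on two $2$-labeled strands reduces to $q^2\,\mathrm{id}$, matching $a^{-2}q^{6}$ at $a=q^2$. Interpolating from integer points can only transport the integer value; it cannot flip the sign of an exponent.

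What is missing is the explicit exponent count. With it, you should conclude that, under the conventions of \cref{lemma:curlHomfly} and \cref{prop:xWeb_presentation}, the value is $q^{-k(k-1)}\qbin{k}{x}$. Alternatively, flag explicitly that the stated exponent is inconsistent with those results.
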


One can show that the $q$-rationals behave very well with respect to this specialization, and extract from Definition~\ref{def:xWeb} a presentation that is very close in flavor to the presentations for $\nWeb$. We will do so in Proposition~\ref{prop:xWeb_presentation}, but before this we start with a technical computation.

\begin{lemma} \label{lem:formulaqbin}
For $r\in \Z$, specializing $a=\nu_x\delta_x q$ yields:
\[
\left(\frac{aq^{-r}-a^{-1}q^r}{q-q^{-1}}\right)_{a=\nu_x\delta_xq}=
q^r\nu_x\qn{x-r}.
\]
\end{lemma}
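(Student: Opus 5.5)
Substituting $a=q\nu_x\delta_x$ and using $\nu_x^2\delta_x=1$, we get $a=q\nu_x^{-1}$ and $a^{-1}=q^{-1}\nu_x$. The left-hand side is therefore
\[
\frac{q^{1-r}\nu_x^{-1}-q^{r-1}\nu_x}{q-q^{-1}}
=\nu_x\,\frac{q^{1-r}\delta_x-q^{r-1}}{q-q^{-1}}
=q^{r}\nu_x\,\frac{q^{2-2r}\delta_x-1}{q^{2}-1}\cdot q^{-r}\cdot q^{r}\cdots
\]
More cleanly, we multiply numerator and denominator by $q$ and factor out $q^{r}\nu_x$, where $\nu_x^{-1}=\nu_x\delta_x$. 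This gives
\[
q^{r}\nu_x\cdot\frac{q^{2-2r}\delta_x-1}{q^2-1}.
\]
It therefore suffices to prove the identity
\[
\qn{x-r}=\frac{q^{2-2r}\delta_x-1}{q^2-1}\qquad(r\in\Z).
\]

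For $r=0$, this is exactly the computation in the proof of \cref{lem:qrat_sunknot}. For general $r$, we use the shift equivariance \eqref{shift_equivariance}, $\qn{y+1}=q^2\qn{y}+1$. First, this gives $\delta_x=\qn{x}-\qn{x-1}=(q^2-1)\qn{x-1}+1$. Then, by induction on $r$ in both directions (using $\qn{y}=q^{-2}(\qn{y+1}-1)$ for the downward direction), we get
\[
(q^2-1)\qn{x-r}+1=q^{-2(r-1)}\bigl((q^2-1)\qn{x-1}+1\bigr).
\]
Indeed, the quantity $(q^2-1)\qn{y}+1$ is multiplied by $q^2$ when $y$ is replaced by $y+1$: we have $(q^2-1)(q^2\qn{y}+1)+1=q^2\bigl((q^2-1)\qn{y}+1\bigr)$. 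Combining the two displays gives $(q^2-1)\qn{x-r}+1=q^{2-2r}\delta_x$, which is the needed identity.

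There is no real obstacle here. The only care needed is with the powers of $q$: first in rewriting $a^{-1}$ via $\nu_x^{-1}=\nu_x\delta_x$, and second in checking that the factor pulled out is $q^r\nu_x$. For the latter, $\frac{q^{1-r}\delta_x-q^{r-1}}{q-q^{-1}}=\frac{q^{2-r}\delta_x-q^{r}}{q^2-1}=q^r\frac{q^{2-2r}\delta_x-1}{q^2-1}$, which is consistent.
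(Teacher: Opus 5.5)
Your proof is correct, but it is organized differently from the paper's. The paper never isolates a closed form. It first proves, before specializing, the recursion $\frac{aq^{-r}-a^{-1}q^r}{q-q^{-1}}=q^{-1}\frac{aq^{1-r}-a^{-1}q^{r-1}}{q-q^{-1}}-a^{-1}q^{r-1}$, which is the same identity that closes the proof of \eqref{eq:sideway_digon_k}. It then specializes this using $a^{-1}=q^{-1}\nu_x$ and runs an induction on $r\geq 0$, each step closed by $\qn{y+1}=q^2\qn{y}+1$. A second induction handles $r<0$ with a mirrored recursion. The base case $r=0$ is left implicit; it is the unknot computation $\frac{a-a^{-1}}{q-q^{-1}}\mapsto \nu_x\qn{x}$.

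You instead factor out $q^r\nu_x$ once. This reduces the whole lemma to the $\nu_x$-free identity $(q^2-1)\qn{x-r}+1=q^{2-2r}\delta_x$. You then get this identity for all $r\in\Z$ at once, from the fact that $(q^2-1)\qn{y}+1$ (which is just $\delta_{y+1}$) is multiplied by $q^2$ under $y\mapsto y+1$. Your route therefore gives a uniform treatment with no case split and an explicit base case. It also exposes why the lemma holds: after specialization, $a^2=q^2\delta_x$ behaves exactly like $q^{2x}$, because $\delta_{y+1}=q^2\delta_y$. In addition, it avoids the sign bookkeeping of the $r<0$ recursion: there the correction term must be $+a^{-1}q^{r+1}$, whereas the paper prints a minus sign. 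The paper's route, for its part, stays in the unspecialized coefficient ring until the last step and matches the diagrammatic inductions used elsewhere in the paper.

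One cosmetic fix: delete your first, abandoned display ending in $\cdot q^{-r}\cdot q^{r}\cdots$, and keep the ``more cleanly'' computation that follows it.
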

\begin{proof}
We start with $r\geq 0$. Then:
\[
\frac{aq^{-r}-a^{-1}q^r}{q-q^{-1}}=q^{-1}\frac{aq^{1-r}-a^{-1}q^{r-1}}{q-q^{-1}}-a^{-1}q^{r-1}.
\]
By induction, this becomes after specialization:
\[
q^{r-2}\nu_x\qn{x-r+1}-\nu_xq^{r-2}=q^{r-2}\nu_x(\qn{x-r+1}-1)=q^{r}\nu_x\qn{x-r}.
\]
Now, if $r<0$, we rather use:
\[
\frac{aq^{-r}-a^{-1}q^r}{q-q^{-1}}=q\frac{aq^{-1-r}-a^{-1}q^{r+1}}{q-q^{-1}}-a^{-1}q^{r+1}.
\]
By induction from the $r=0$ case, this becomes after specialization:
\[
q^{r+2}\nu_x\qn{x-r-1}-\nu_xq^{r}=q^{r}\nu_x(q^2\qn{x-r-1}+1)=q^{r}\nu_x\qn{x-r}.
\]
\end{proof}

\begin{proposition} \label{prop:xWeb_presentation}
The category $\xWeb$ is the braided $Z(q)[\nu_x]$-linear category with objects, upward and downward arrows on a horizontal line with labels in $\N^\ast$. Its morphisms are generated by webs. The braiding is given by \cref{eq:braiding}. Webs are considered up to isotopy and up to the following relations:
\begin{itemize}
    \item upward relations from Definition~\ref{def:iWeb+}. These relations are independent on $x$;
    \item circle evaluation:
\begin{equation}
    \label{eq:unknot}
\begin{tikzpicture}[anchorbase]
\draw [semithick, ->] (0,0) arc (0:360:.5) ;
\node at (.2,-.3) {$k$};
\end{tikzpicture}
\; =\;
\begin{tikzpicture}[anchorbase]
\draw [semithick, ->] (0,0) arc (0:-360:.5) ;
\node at (.2,-.3) {$k$};
\end{tikzpicture}
\; = \;
q^{k(k-1)}\nu_x^k\qbin{k}{x}.
\end{equation}
\item sideway digon removal:
\begin{equation}
    \label{eq:sideway_digon}
    \begin{tikzpicture}[anchorbase]
    \draw [semithick,->] (0,0) -- (0,3) ;
    \draw [semithick,->] (0,2) to [out=90,in=90] (1,2) -- (1,1.5);
    \draw [semithick] (1,1.5) -- (1,1) to [out=-90,in=-90] (0,1);
    \node at (0,-.3) {$k$};
    \node at (0,3.3) {$k$};
    \node at (1.3,1.5) {$l$};
    \node [rotate=90] at (-.3,1.5) {$k+l$}; 
    \end{tikzpicture}
    \; = \;
    \begin{tikzpicture}[anchorbase]
    \draw [semithick,->] (0,0) -- (0,3) ;
    \draw [semithick,->] (0,2) to [out=90,in=90] (-1,2) -- (-1,1.5);
    \draw [semithick] (-1,1.5) -- (-1,1) to [out=-90,in=-90] (0,1);
    \node at (0,-.3) {$k$};
    \node at (0,3.3) {$k$};
    \node at (-1.3,1.5) {$l$};
    \node [rotate=90] at (.3,1.5) {$k+l$}; 
    \end{tikzpicture}
    \; = \;
    q^{-kl}\qbin{k}{k+l}\frac{u_{k+l}}{u_l}.
    \begin{tikzpicture}[anchorbase]
    \draw [semithick,->] (0,0) -- (0,3) ;
    \node at (0,-.3) {$k$};
    \node at (0,3.3) {$k$};
    \end{tikzpicture}
\end{equation}
Above $u_r$ stands for the value of the $r$-labeled unknot.
\item square removal (together with its mirror image):
\begin{equation}
    \label{eq:mixed_square}
    \begin{tikzpicture}[anchorbase,decoration={
    markings,
    mark=at position 0.5 with {\arrow{>}}}]
    \draw[semithick, postaction={decorate}] (0,0) to [out=45,in=180] (.5,.5);
    \draw [semithick ,postaction={decorate}] (.5,.5) -- (1.5,.5);
    \draw [semithick,postaction={decorate}] (.5,1.5) to [out=180,in=180] (.5,.5);
    \draw [semithick, postaction={decorate}] (1.5,1.5) -- (.5,1.5);
    \draw [semithick,postaction={decorate}] (1.5,.5) to [out=0,in=0] (1.5,1.5);
    \draw [semithick,->] (1.5,.5) to [out=0,in=135] (2,0);
    \draw [semithick, postaction={decorate}] (2,2) to [out=-135,in=0] (1.5,1.5);
    \draw [semithick, ->] (.5,1.5) to [out=185,in=-45] (0,2);
    \node at (-.2,-.2) {$1$};
    \node at (-.2,2.2) {$1$};
    \node at (2.2,-.2) {$1$};
    \node at (2.2,2.2) {$1$};
    \node at (1,.2) {$2$};
    \node at (1,1.8) {$2$};
    \end{tikzpicture}
    \; = \;
    \begin{tikzpicture}[anchorbase]
    \draw [semithick, ->] (0,0) to [out=45,in=-45] (0,2);
    \draw [semithick,->] (2,2) to [out=-135,in=135] (2,0);
    \end{tikzpicture}
    \; + \;
    q^2\nu_x \qn{x-2}\;
    \begin{tikzpicture}[anchorbase]
    \draw [semithick, ->] (0,0) to [out=45,in=135] (2,0);
    \draw [semithick,->] (2,2) to [out=-135,in=-45] (0,2);
    \end{tikzpicture}
\end{equation}
\end{itemize}
\end{proposition}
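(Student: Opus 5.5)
Since $\xWeb$ is by Definition~\ref{def:xWeb} the specialization $a=q\nu_x\delta_x$ of $\iWeb$, and $\iWeb$ is presented in Definition~\ref{def:iWeb} as the braided pivotal category generated by $\iWeb^+$ modulo \cref{eq:unknot_i}, \cref{eq:sideway_digon_1} and \cref{eq:mixed_square_QS}, my plan is to specialize each defining relation of $\iWeb$ and show that it becomes the corresponding relation in the statement. Conversely, I will check that the relations in the statement imply those of the specialized $\iWeb$. The two presentations then define the same quotient of the free braided pivotal category on $\iWeb^+$, now over $\Z(q)[\nu_x]$. The upward relations are untouched, since $a$ does not appear in them. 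The braiding \cref{eq:braiding} is also independent of $a$, so nothing needs to be said there.

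First I treat the square removal. Its coefficient in \cref{eq:mixed_square_QS} is $\frac{aq^{-2}-a^{-1}q^{2}}{q-q^{-1}}$, and Lemma~\ref{lem:formulaqbin} with $r=2$ turns it into $q^2\nu_x\qn{x-2}$, which is exactly \cref{eq:mixed_square}. The mirror image is handled in the same way.

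Next I treat circles. The footnote formula for $\mu_k$ is a product of the factors $\frac{aq^{-j}-a^{-1}q^{j}}{q-q^{-1}}$ for $j=0,\dots,k-1$, times $q^{k(k-1)/2}/\qn{k}!$. Lemma~\ref{lem:formulaqbin} sends each factor to $q^j\nu_x\qn{x-j}$. The product is therefore $\nu_x^k q^{k(k-1)/2}\qn{x}\cdots\qn{x-k+1}$, and multiplying by $q^{k(k-1)/2}/\qn{k}!$ gives $q^{k(k-1)}\nu_x^k\qbin{k}{x}$, as claimed in \cref{eq:unknot}. As a cross-check, I will also compare with Lemma~\ref{lem:qrat_sunknot_col} together with the twist lemma, since the stabilized unknot equals the unknot times a twist factor $\nu_x^{-k}q^{-2k(k-1)}$ up to the sign and framing conventions. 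If the footnote's normalization is slightly off, I would instead derive $\mu_k$ inductively by closing \cref{eq:sideway_digon_k}. In that approach, closing the $k$-strand of the $(k,1)$ sideway digon and exploding the $(k+1)$-circle with \cref{eq:updigon} gives $\qbin{1}{k+1}q^{-k}\mu_{k+1}=\frac{aq^{-k}-a^{-1}q^k}{q-q^{-1}}\mu_k$, which determines every $\mu_k$.

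The main step is the general sideway digon \cref{eq:sideway_digon} with arbitrary $l$. I only have $l=1$, through \cref{eq:sideway_digon_k} and Lemma~\ref{lem:formulaqbin}, which gives coefficient $q^k\nu_x\qn{x-k}$. For general $l$, I plan to show that any closure of $k+l$ against $l$ must be a scalar multiple of the identity on $k$. The scalar can then be fixed by taking the trace: closing the $k$ strand yields a $(k+l)$-circle with a digon, which by \cref{eq:updigon} equals $q^{-kl}\qbin{k}{k+l}u_{k+l}$, while the right-hand side gives $c\,u_k$. But this produces $u_{k+l}/u_k$, not $u_{k+l}/u_l$ as written. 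I suspect the roles of $k$ and $l$ in the binomial make these agree, since $\qbin{k}{k+l}=\qbin{l}{k+l}$, so I will reconcile the labels carefully at this point.

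The scalar claim is the hard part. The cleanest route is to explode the $l$-strand into $1$-strands using \cref{eq:updigon} and peel them off one at a time with \cref{eq:sideway_digon_k}, sliding the merge vertices with \cref{eq:triwebs}. Each step removes one $1$-labelled digon, and the resulting product of factors $q^{k+j}\nu_x\qn{x-k-j}$ should telescope to the stated ratio of unknot values. I expect the bookkeeping of the $q$-powers in this induction to be the main obstacle. Checking the final identity against the trace computation will serve as a consistency test.

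Finally, for the converse direction, the $l=1$, $k=1$ case of \cref{eq:sideway_digon} recovers \cref{eq:sideway_digon_1}, and \cref{eq:unknot} at $k=1$ recovers \cref{eq:unknot_i}. So the two presentations coincide.
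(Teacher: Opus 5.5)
Your strategy is the paper's. You specialize each defining relation of $\iWeb$ at $a=q\nu_x\delta_x$ and match coefficients using Lemma~\ref{lem:formulaqbin}, and the square removal is handled identically. For circles you multiply out the footnote formula. The paper instead derives the recursion $\mu_k=\frac{q^{k-1}}{\qn{k}}\frac{aq^{1-k}-a^{-1}q^{k-1}}{q-q^{-1}}\mu_{k-1}$ by exploding the circle and applying \cref{eq:sideway_digon_k}. That is exactly your fallback, and since the footnote is stated without proof, it is the self-contained route; both give $q^{k(k-1)}\nu_x^k\qbin{k}{x}$. A side note on your cross-check: \cref{eq:unknot} times the twist factor $\nu_x^{-k}q^{-2k(k-1)}$ gives $q^{-k(k-1)}\qbin{k}{x}$ for the colored stabilized unknot. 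A mismatch with Lemma~\ref{lem:qrat_sunknot_col} at that point therefore does not indicate an error in your circle computation.

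The step that fails is your proposed reconciliation in the general sideway digon. The discrepancy you found is real. The two candidates $u_{k+l}/u_k$ and $u_{k+l}/u_l$ differ by the factor $u_l/u_k$, and the symmetry $\qbin{k}{k+l}=\qbin{l}{k+l}$ does nothing to that ratio. Concretely, take $k=2$, $l=1$:
\begin{itemize}
\item \cref{eq:sideway_digon_k} with Lemma~\ref{lem:formulaqbin} gives $q^{2}\nu_x\qn{x-2}$;
\item the printed formula gives $q^{-2}\qbin{2}{3}u_3/u_1=q^{4}\nu_x^{2}\qn{x-1}\qn{x-2}/\qn{2}$.
\end{itemize}
At $x=4$ these are already $q^{-1}+q$ versus $q^{-2}+1+q^{2}$. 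Structurally, each unit of loop label contributes one factor of $\nu_x$, so the true coefficient has $\nu_x$-degree $l$, while $u_{k+l}/u_l$ has degree $k$. So the printed coefficient, with the vertical strand labelled $k$, is a misprint, and what actually holds is
\[
q^{-kl}\qbin{k}{k+l}\frac{u_{k+l}}{u_k}=q^{kl+l(l-1)}\nu_x^{l}\qbin{l}{x-k}.
\]
Your peeling argument proves this directly, and with it the scalarity that your trace argument needs:
\begin{itemize}
\item Exploding the $l$-loop costs a factor $1/(q^{-l(l-1)/2}\qn{l}!)$ by \cref{eq:updigon}.
\item After sliding the vertices over the cup and cap, the innermost $1$-loop sits on a $(k+l-1)$-strand.
\item Removing loops from the inside out produces $\prod_{j=0}^{l-1}q^{k+j}\nu_x\qn{x-k-j}$, and this product simplifies to the right-hand side above.
\end{itemize}
The paper's proof only checks $k=l=1$, where $u_k=u_l$ and the misprint is invisible, and then asserts that higher labels follow. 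So trust your trace computation and state the relation with $u_k$ in the denominator, rather than trying to force agreement with $u_{k+l}/u_l$.
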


\begin{proof}

 The relations in Definition~\ref{def:iWeb} and Proposition~\ref{prop:xWeb_presentation} do have the same form, so one just has to check that the coefficients match.
 
 The check for the $1$-labeled unknot is very similar to the proof of \cref{lem:qrat_sunknot}. Indeed, we want to see that:
 \[
 \frac{a-a^{-1}}{q-q^{-1}}\xrightarrow{a=q\nu_x\delta_x} \nu_x \qn{x}.
 \]
 Let us rewrite:
 \[
 \frac{a-a^{-1}}{q-q^{-1}}=a^{-1}q\frac{1-a^2}{1-q^2}\xrightarrow{a=q\nu_x\delta_x} \nu_x^{-1}\delta_x^{-1}\qn{x}=\nu_x\qn{x}.
 \]
 Other labelings will follow inductively from this one and the other relations. Indeed, in $\iWeb$, we have:
 \begin{align*}
 \mu_k&=
 \begin{tikzpicture}[anchorbase]
 \draw [semithick,->] (.5,0) arc (0:360:.5);
 \node at (.6,-.2) {$k$};
 \end{tikzpicture}
 =
 \frac{1}{q^{1-k}\qn{k}}
 \begin{tikzpicture}[anchorbase,anchorbase,decoration={markings,mark=at position 0.5 with {\arrow{>}}}]
 \draw [semithick,postaction={decorate}] (.4,.3) arc (38:322:.5);
 \draw [semithick,postaction={decorate}] (.4,-.3) to [out=120,in=-120] (.4,.3);
 \draw [semithick,postaction={decorate}] (.4,-.3) to [out=60,in=-60] (.4,.3);
 \node at (-.6,-.2) {$k$};
 \node [rotate=90] at (.75,0) {\tiny $k-1$};
 \node at (.15,0) {\tiny $1$};
 \end{tikzpicture}
=
 \frac{1}{q^{1-k}\qn{k}}
 \begin{tikzpicture}[anchorbase,anchorbase,decoration={markings,mark=at position 0.5 with {\arrow{>}}}]
 \draw [semithick,postaction={decorate}] (-.4,-.3) arc (-142:142:.5);
 \draw [semithick,postaction={decorate}] (-.4,.3) to [out=-120,in=120] (-.4,-.3);
 \draw [semithick,postaction={decorate}] (-.4,-.3) to [out=-60,in=-90] (0,0) to [out=90,in=60] (-.4,.3);
 \node at (-.7,0) {$k$};
 \node [rotate=90] at (.75,0) {\tiny $k-1$};
 \node at (-.15,0) {\tiny $1$};
 \end{tikzpicture}
 \\
 &=
 \frac{1}{q^{1-k}\qn{k}}\frac{aq^{-k+1}-a^{-1}q^{k-1}}{q-q^{-1}}
 \begin{tikzpicture}[anchorbase]
 \draw [semithick,->] (.5,0) arc (0:360:.5);
 \node at (.6,-.2) {$k-1$};
 \end{tikzpicture}
=\frac{1}{q^{1-k}\qn{k}}\frac{aq^{1-k}-a^{-1}q^{k-1}}{q-q^{-1}} \mu_{k-1}.
 \end{align*}
 Now, we can specialize $a=q\nu_x\delta_x$ and use \cref{lem:formulaqbin} to deduce the following recursive formulas for the specializations $u_{k}$:
 \[
 u_{k}=\frac{1}{q^{1-k}\qn{k}}q^{k-1}\nu_x\qn{x-k+1} u_{k-1}=q^{2k-2}\nu_x\frac{\qn{x-k+1}}{\qn{k}}u_{k-1}.
 \]
 Then one can check that the values:
 \[
u_{k}=q^{k(k-1)}\nu_x^k\qbin{k}{x}
 \]
 are indeed solutions to this recursive equation.

For the sideway digon removal with $k=1$, we recall from \cref{lem:formulaqbin} that: 
\[
\frac{aq^{-1}-a^{-1}q}{q-q^{-1}}
\xrightarrow{a=q\nu_x\delta_x}
q\nu_x\qn{x-1},
\]
which is the expected value for \cref{eq:sideway_digon}. The generalized relations with higher labels are consequences of the $k=1$ case.

Finally, for the mixed square removal relation, we can use once again \cref{lem:formulaqbin} to get that:
\[
\frac{aq^{-2}-a^{-1}q^2}{q-q^{-1}}
\xrightarrow{a=q\nu_x\delta_x}
\nu_x(\qn{x-1}-1)=q^2\nu_x\qn{x-2},
\]
as expected in \cref{eq:mixed_square}.
\end{proof}

\begin{remark}
If $x\in \N$, then $\xWeb$ recovers the category $\nWeb$.

This follows from the fact that:
\[
\delta_n=\frac{q^{2n}-1}{q^2-1}-\frac{q^{2n-2}-1}{q^2-1}=q^{2n-2}.
\]
\end{remark}

With these specialization functors at the ready, we can naturally define the following knot invariants, interpolating the Reshetikhin-Turaev polynomials.

\begin{definition} \label{def:xInv}
  Given $K$ a framed knot, we define $[K]_x$ to be the $a=q\nu_x\delta_x$ specialization of the HOMFLY-PT polynomial of $K$. Equivalently, this can be computed as the value of the knot in $\End_{\xWeb}(\emptyset).$
\end{definition}

\begin{example}
Let us consider the trefoil knot, with coloring $k$. Using the braiding formulas, we get
\begin{equation*}
    \begin{tikzpicture}[semithick,anchorbase,decoration={
    markings,
    mark=at position 0.5 with {\arrow{>}}}]
    \draw[->] (0,0) -- (0.5,0.5);
    \draw (0.5,0) -- (0.3,0.2);
    \draw[->] (0.2,0.3) -- (0,0.5);
    
    \draw[->] (0,1) -- (0.5,1.5);
    \draw (0.5,1) -- (0.3,1.2);
    \draw[->] (0.2,1.3) -- (0,1.5);
    
    \draw[->] (0,2) -- (0.5,2.5);
    \draw (0.5,2) -- (0.3,2.2);
    \draw[->] (0.2,2.3) -- (0,2.5);
    
    \draw (0,0.5)to[out=135,in=-145] (0,1);
    \draw (0.5,0.5)to[out=45,in=-45] (0.5,1);
    
    \draw (0,1.5)to[out=135,in=-145] (0,2);
    \draw (0.5,1.5)to[out=45,in=-45] (0.5,2);
    
    \draw (0,2.5)to[out=135,in=-145] (0,0);
    \draw (0.5,2.5)to[out=45,in=-45] (0.5,0);
    \node at(-0.7,1.5) {$k$};
    \node at(1.2,1.5) {$k$};
    \end{tikzpicture}
    \; = (-q)^{3k^2} \sum_{a,b,c=0}^{k} (-q)^{a+b+c-3k} \;
    \begin{tikzpicture}[semithick,anchorbase,decoration={
    markings,
    mark=at position 0.5 with {\arrow{>}}}]
    \draw[->] (0,-0.2) -- (0,2.5);
    \draw[->] (1,-0.2) -- (1,2.5);
    \draw[postaction={decorate}] (0,0)to (1,0.2);
    \draw[postaction={decorate}] (1,0.4) -- (0,0.6);
    \draw[postaction={decorate}] (0,0.8) -- (1,1);
    \draw[postaction={decorate}] (1,1.2) -- (0,1.4);
    \draw[postaction={decorate}] (0,1.6) -- (1,1.8);
    \draw[postaction={decorate}] (1,2) -- (0,2.2);
    
    \draw (0,2.5)to[out=120,in=-120] (0,-0.2);
    \draw (1,2.5)to[out=60,in=-60] (1,-0.2);
    
    \node at(-0.6,1.5) {$k$};
    \node at(1.6,1.5) {$k$};
    \node at(0.7,0.08) {$\underset{a}{~}$};
    \node at(0.3,0.5) {$\underset{a}{~}$};
    \node at(0.7,0.88) {$\underset{b}{~}$};
    \node at(0.3,1.3) {$\underset{b}{~}$};
    \node at(0.7,1.68) {$\underset{c}{~}$};
    \node at(0.3,2.1) {$\underset{c}{~}$};
    \end{tikzpicture}
\end{equation*}

\noindent For example with $k=1$, the invariant turns out to be
\begin{equation*}
    \begin{tikzpicture}[scale=0.7,semithick,anchorbase,decoration={
    markings,
    mark=at position 0.5 with {\arrow{>}}}]
    \draw[->] (0,0) -- (0.5,0.5);
    \draw (0.5,0) -- (0.3,0.2);
    \draw[->] (0.2,0.3) -- (0,0.5);
    
    \draw[->] (0,1) -- (0.5,1.5);
    \draw (0.5,1) -- (0.3,1.2);
    \draw[->] (0.2,1.3) -- (0,1.5);
    
    \draw[->] (0,2) -- (0.5,2.5);
    \draw (0.5,2) -- (0.3,2.2);
    \draw[->] (0.2,2.3) -- (0,2.5);
    
    \draw (0,0.5)to[out=135,in=-145] (0,1);
    \draw (0.5,0.5)to[out=45,in=-45] (0.5,1);
    
    \draw (0,1.5)to[out=135,in=-145] (0,2);
    \draw (0.5,1.5)to[out=45,in=-45] (0.5,2);
    
    \draw (0,2.5)to[out=135,in=-145] (0,0);
    \draw (0.5,2.5)to[out=45,in=-45] (0.5,0);
    \node at(-0.7,1.5) {$k$};
    \node at(1.2,1.5) {$k$};
    \end{tikzpicture}
    = (-q)^3\Big( -q^{-3}u_1^2 + 3q^{-3}\qn{2}u_2 - 3q^{-3}\qn{2}^2u_2 + q^{-3}\qn{2}^3u_2 \Big)
    = \nu_x^2\qn{x}(q^4 + (1-q^2)\qn{x+1}).
\end{equation*}
\noindent In particular, for $x = 2$, we recover the Jones polynomial. 
\end{example}

\begin{remark}
Contrarily to the HOMFLY-PT polynomial and its Reshetikhin-Turaev specializations, the $x$-invariant of the mirror image of a knot is \emph{not} in general obtained under $q\leftrightarrow q^{-1}$. This is caused by the fact that $\qn{x}$ is in general not symmetric under $q\leftrightarrow q^{-1}$ (even up to a power of $q$). For example, the $x=\frac{2}{3}$ invariant of the trefoil and its mirror image are given below.
\begin{gather*}
    \frac{q^4+q^6+2q^8+2q^{10}}{1+q^2+q^4+2q^6+2q^8+q^{10}} \text{ , } 
    \frac{q^{-2}+q^4+ q^6+2q^8+q^{10}}{1+q^2+q^4+2q^6+2q^8+q^{10}}.
\end{gather*}
\end{remark}

\begin{remark}
One can also define the category $\xWeb$ for $x$ an irrational number, but then the underlying ring is $\Z((q))[\nu_x]$. 
\end{remark}

\begin{remark}\label{rem:normalizedinvariant}
Notice that $\nu_x^{-\mathrm{wr}(K)}[K]_x$ is a genuine knot invariant, with $\mathrm{wr}$ the writhe of $K$ (that is, the number of positive crossings minus the number of negative crossings for any diagram).
\end{remark}

\subsection{Without the quadratic extension}

The following statement makes it easier to reduce to just a rational fraction.
\begin{lemma}\label{lem:homogeneous}
Given a link $L$, the associated $x$-invariant is homogeneous in $\nu_x$.
\end{lemma}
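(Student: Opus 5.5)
The plan is to prove a parity statement for the framed HOMFLY-PT invariant $[L]_a$ before specializing. Set $R:=\Z[q^{\pm 1}][(q-q^{-1})^{-1}]$. For a link diagram $D$ with $c(D)$ components and writhe $\mathrm{wr}(D)$, I will show
\[
[D]_a\in a^{\varepsilon(D)}\,R[a^{\pm 2}], \qquad \varepsilon(D)\equiv c(D)+\mathrm{wr}(D) \pmod 2 .
\]
The lemma then follows from the specialization. Under $a=q\nu_x\delta_x$ we have $a^2=q^2\nu_x^2\delta_x^2=q^2\delta_x$, which lies in $\Z(q)$ because $\delta_x$ does. Also $a=q\nu_x\delta_x$ is $\nu_x$ times an element of $\Z(q)$, and $a^{-1}=q^{-1}\nu_x^{-1}\delta_x^{-1}=q^{-1}\nu_x$ by \cref{eq:nux}. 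So any element of $a^{\varepsilon}R[a^{\pm 2}]$ specializes to $\nu_x^{\varepsilon}$ times a rational fraction in $q$. The parity of the $\nu_x$-degree of $[L]_x$ is therefore $c(L)+\mathrm{wr}(L)$.

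For the parity claim, I would use the computation rules of Figure~\ref{fig:HOMFLY-PT} and check that each one is compatible with the parity $c+\mathrm{wr}$.
\begin{itemize}
\item The unknot has value $\mu=(a-a^{-1})/(q-q^{-1})$. This is odd in $a$, matching $c=1$ and $\mathrm{wr}=0$.
\item Disjoint union multiplies values, and both $c$ and $\mathrm{wr}$ add under disjoint union.
\item Removing a curl multiplies the value by $q^{\pm1}a^{\mp1}$, which flips the $a$-parity. It also changes $\mathrm{wr}$ by $\pm1$ and leaves $c$ unchanged, so $c+\mathrm{wr}$ flips as well.
\item In the Conway relation, $L_+$ and $L_-$ have the same number of components and writhes differing by $2$. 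The smoothing $L_0$ has $c(L_0)=c(L_+)\pm1$ and $\mathrm{wr}(L_0)=\mathrm{wr}(L_+)-1$. So all three terms have the same parity of $c+\mathrm{wr}$. The coefficients $q^{\pm1}$ and $q^{-1}-q$ are free of $a$, so the relation is homogeneous for the $a$-parity.
\end{itemize}

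Next, I would run the standard skein induction, on the number of crossings and, for fixed crossing number, on the number of crossing changes needed to reach a descending diagram. Using the Conway relation, $[D]_a$ is expressed through the value of a diagram with one crossing changed, plus the value of a diagram with fewer crossings. A descending diagram is isotopic to a framed unlink, which is a disjoint union of unknots with curls and is handled by the base cases above. At each step, $[D]_a$ is an $R$-linear combination, with coefficients free of $a$, of terms that have the parity $c(D)+\mathrm{wr}(D)$ by induction. So $[D]_a$ has that parity too. Because $[L]_a$ is an invariant, the parity statement holds for $L$ independently of the chosen diagram.

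The main obstacle I expect is making the induction rigorous. One must make sure that every intermediate diagram, and in particular the isotopy from a descending diagram to a framed unlink, is handled with its framing tracked correctly. A cleaner route, if that becomes messy, is to work directly in $\iWeb$. Put a $\Z/2$-grading on webs by giving each crossing weight $1$ and each closed component weight $1$, or more intrinsically use the $\mathfrak{gl}_1$-type grading in which $a$ has odd degree. Then check that every defining relation of Definition~\ref{def:iWeb} is homogeneous: the upward relations, \cref{eq:braiding}, \cref{eq:unknot_i}, \cref{eq:sideway_digon_1} and \cref{eq:mixed_square_QS}. In each of these the coefficients are either free of $a$ or odd in $a$, and in the latter case the web changes by an odd number of loops, consistently. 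I would attempt the skein version first, since it needs only Figure~\ref{fig:HOMFLY-PT}.
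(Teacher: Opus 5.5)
Your proposal is correct and is the paper's argument: the paper deduces the lemma directly from the classical fact that the HOMFLY-PT polynomial of a link contains only odd or only even powers of $a$ (citing Chmutov--Polyak), which is exactly your parity statement. You additionally supply the standard skein-induction proof of that fact, with the explicit parity $c+\mathrm{wr}$, and you spell out the specialization $a^{2}=q^{2}\delta_x$, $a^{-1}=q^{-1}\nu_x$, which the paper leaves implicit. The one piece to discard is the first version of your fallback grading on $\iWeb$, with crossings of weight $1$: it cannot be compatible with \cref{eq:braiding}, where a crossing is an $a$-free combination of crossingless webs; a label-weighted rotation number taken mod $2$ does make all defining relations homogeneous, and by Whitney's formula it agrees with $c+\mathrm{wr}$ mod $2$ on link diagrams.
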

\begin{proof}
This can be deduced from the fact that the HOMFLY-PT polynomial has either only odd or only even powers of $a$. This classical fact can for example be found in~\cite[Remark 2.2]{ChmutovPolyak}.
\end{proof}

One could only consider framed knots or links that produce a homogeneous element of degree $0$ in $\nu_x$. Interestingly, this class does not contain the unknot, but rather versions of it that are stabilized an odd number of time.

\section{Webs, quantized Deligne's category and interpolation between categories of representations}
\label{sec:Deligne}

In this section, we will go back to representation theory. Recall that the computation of the Reshetikhin-Turaev link invariant associated to the quantum group $U_q(\mathfrak{gl}_n)$ ($n=2$ being the Jones polynomial) can be obtained through the category $\nWeb$ as above, but originally relied on the use of $U_q(\mathfrak{gl}_n)$ representations.  To compute these polynomials, one assigns to a tangle a morphism between representations corresponding to the end points of the tangle. The rule to read such representation is to take a tensor product of a copy of the vector representation $\C[q^{\pm 1}]^{n}$ for each upward arrow, and its dual for each downward arrow. As already noted, thanks to works going from Kuperberg~\cite{Kup} to Cautis-Kamnitzer-Morrison~\cite{CKM}, these morphisms are described by webs and one can equivalently perform the computations in the diagrammatic world.

Now, there are two cases where one looses the representation-theoretic background: either for the HOMFLY-PT polynomial, or when we try to replace the parameter $n$ by a real parameter. We will comment on both situations.

For the first one, the category $\iWeb$ provides a handy setting. It is closely related to diagrammatic category $\mathcal{OS}(z,t)$ studied by Brundan in~\cite{Brundan}, where a review of the history of this category is described in Section 1.1.

The second one is actually quite intricate. In the non-quantum setting, Deligne's category $\mathrm{Rep}(GL_t)$ (see~\cite[Chapter 9.12]{Etingof_book} and references therein, notably~\cite{DeligneMilne}) precisely provides an interpolation between the integral values. It contains as a subcategory the so-called walled Brauer algebras. Quantizations of these objects appear in the literature \cite{Kosuda_Murakami,Leduc}, notably in Brundan's work~\cite[Section 1.4]{Brundan} where it is argued that one can take the $a=q^x$ specialization (this is the choice we are willing to circumvent). The notion of quantized oriented Brauer algebra/category is more classical, and properties of these have been extensively studied, for example in~\cite{AST}. However, simply setting $a=q^{x}$ is somewhat undesirable as $q^x$ has no reason to be in $\Q(q)$, and usually is not a well-defined holomorphic function if one desires to specialize the variable $q$ to complex numbers. We thus reformulate the question of defining a quantized Deligne category as follows.

\begin{question}
Does there exist a family of specializations of $\iWeb$ that become $\Q(q)$-linear and so that the unknot's values go to $x$ when $q$ goes to $1$?
\end{question}

This section is devoted to showing that the categories $\xWeb$ provide a satisfying answer, once the question has been slightly loosened to allow to incorporate the parameter $\nu_x$.

\subsection{Structure of the specialization} \label{sec:structure}

To support the above claim, we challenge the choices made throughout this paper, to conclude that they are essentially unique.

Recall that we have defined a $\Q$-indexed family of specializations $\iWeb\rightarrow \xWeb$. In this section, we aim at relating the different specialization functors, for different values of $x$. Here is the strategy: we are looking for categories obtained from $\iWeb$ by taking a quotient of the ring of coefficients $\Q(a,q)$. Let us denote $F_x$ the associated projection functor, and see what relations will imply that the family $\{F_x\}_{x\in \Q}$ is the one we have defined. Each of these functors $F_x$ defines a notion of $q$-deformed $x$ as the image of the stabilized unknot from \cref{lem:qrat_sunknot}. Let us denote these images as $\tqn{x}$.

First, in order to recover the known situation, we impose that at $x\in \N$, the functors $F_n$ are the natural surjections onto $\nWeb$. In other words, we want to have:
\[
F_n\left(\frac{1-a^2}{1-q^2}\right)=\qn{n}.
\]

Let us first focus on the relation between $F_x$ and $F_{x+1}$.\footnote{Note that the restriction function in the non-quantum setting $\mathrm{Rep}(\mathfrak{gl}_x)\rightarrow \mathrm{Rep}(\mathfrak{gl_{x-1}})$ does not extend in the quantum setting to a braided functor, even in the case when $x\in \N$. This prevents us from using this functor as a unification tool, although it would be quite interesting to figure out the natural context in which this functor lives.} Consider the sideway digon removal from~\cref{eq:sideway_digon_1}.  From there, one reads a coefficient $\frac{aq^{-1}-a^{-1}q}{q-q^{-1}}$. When $a=q^n$, this becomes $q^{2-n}\qn{n-1}$, meaning that 
$$
F_n\left(\frac{aq^{-1}-a^{-1}q}{q-q^{-1}}\right) = F_{n-1}\left(\frac{a-a^{-1}}{q-q^{-1}}\right).
$$
\noindent This suggests that for any $x$, the value $F_{x-1}(\frac{a-a^{-1}}{q-q^{-1}})$ should be related to $F_x\left(\frac{aq^{-1}-a^{-1}q}{q-q^{-1}}\right)$.
To bring this closer to our purpose, let us consider the following variation of \cref{eq:sideway_digon_1}:
\[
    \begin{tikzpicture}[anchorbase]
    \draw [semithick,->] (0,0) -- (0,3) ;
    \draw [semithick,->] (0,2) to [out=90,in=90] (1,2);
    \draw [semithick] (1,2) -- (1,1.5) to [out=-90,in=180] (1.25,1.25) to [out=0,in=-90] (1.5,1.5) to [out=90,in=0] (1.25,1.75) to [out=180,in=80] (1.1,1.7);
    \draw [semithick] (1,1.3) -- (1,1) to [out=-90,in=-90] (0,1);
    \node at (0,-.3) {$1$};
    \node at (0,3.3) {$1$};
    \node at (1.3,2.2) {$1$};
    \node at (-.3,1.5) {$2$}; 
    \end{tikzpicture}
    \; = \;
\frac{a^2q^{-2}-1}{q-q^{-1}}.
    \begin{tikzpicture}[anchorbase]
    \draw [semithick,->] (0,0) -- (0,3) ;
    \node at (0,-.3) {$1$};
    \node at (0,3.3) {$1$};
    \end{tikzpicture}
\]
Then evaluating to $a=q^n$ suggests the following identity:
\[
F_x\left(\frac{a^2q^{-2}-1}{q-q^{-1}}\right)=qF_{x-1}\left(\frac{a^2-1}{q^2-1}\right),\]
which in turn implies:
\[
 F_{x}\left(\frac{a^2-1}{q^2-1}\right) = q^2F_{x-1}\left(\frac{a^2-1}{q^2-1}\right) + 1.
\]

\begin{theorem}
There is a unique family of functors $\{F_x\}_{x\in \Q}$ such that 
\begin{itemize}
    \item[(i)] $\forall n \in \N$, $F_n : \iWeb \longrightarrow \nWeb$ are the usual ones (see Definition \ref{def:nWeb}),
    \item[(ii)] $\forall x\in \Q$, $\tqn{x+1} = q^2\tqn{x} + 1$,
    \item[(iii)] There is a relation of the form 
    \[
\widetilde{\left\lbrace\frac{1}{x}\right\rbrace} = q^\alpha \frac{1}{\tqn{x}_{q^{\epsilon}}},\;\alpha\in \Z,\; \epsilon\in \{\pm 1\}.
\]
\end{itemize}
\noindent This unique family is the one we have defined in Definition \ref{def:xWeb}.
\end{theorem}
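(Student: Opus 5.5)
The plan is to reduce everything to the single scalar $\tqn{x}$. Each $F_x$ is, by construction, induced by a ring map out of the coefficient ring of $\iWeb$ that is the identity on $q$. Such a map is determined by the image of $a$. Since $\tqn{x}=F_x\!\left(\frac{a^2-1}{q^2-1}\right)$, we get
\[
F_x(a^2)=1+(q^2-1)\tqn{x},
\]
so knowing $\tqn{x}$ pins down $F_x(a)$ up to the choice of a square root. That residual sign is exactly what the formal parameter $\nu_x$ of \eqref{eq:nux} absorbs. By \cref{lem:homogeneous}, a global sign change of $a$ only affects link invariants by an overall sign depending on the parity. Hence uniqueness of the family amounts to showing $\tqn{x}=\qn{x}$ for all $x\in\Q$. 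Once this holds, $F_x(a^2)=q^2\delta_x=(q\nu_x\delta_x)^2$, which is the specialization of Definition~\ref{def:xWeb}.

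First I pin down the integers. Condition (i) together with \cref{lem:qint_sunknot} gives $\tqn{n}=\qn{n}$ for $n\in\N$. Condition (ii), read backwards as $\tqn{x-1}=q^{-2}(\tqn{x}-1)$, extends this to all $n\in\Z$. For instance $\tqn{0}=0$ and $\tqn{-1}=-q^{-2}$.

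Next I fix the constants $\alpha$ and $\epsilon$ in (iii) by testing small cases.
\begin{itemize}
\item Taking $x=1$ gives $1=\tqn{1}=q^\alpha/\tqn{1}_{q^\epsilon}=q^\alpha$, so $\alpha=0$.
\item Taking $x=-1$ gives $-q^{-2}=\tqn{-1}=1/\tqn{-1}_{q^{\epsilon}}=-q^{2\epsilon}$, so $\epsilon=-1$.
\end{itemize}
Thus $\tqn{\cdot}$ satisfies exactly the two relations of \eqref{shift_equivariance}.

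Then I conclude by induction along the (even) continued fraction expansion. Every nonzero rational is obtained from an integer by finitely many steps $x\mapsto x\pm1$ and $x\mapsto 1/x$; this is the Euclidean algorithm, equivalently the transitivity of the $\PGL_2(\Z)$ action on $\Q\cup\{\infty\}$. I induct on the length of the expansion, or on the sum of $|p|+|q|$ for $x=p/q$. Both $\tqn{\cdot}$ and $\qn{\cdot}$ obey the same recursions and agree on $\Z$, so they agree on all of $\Q$. The inversion step is never applied at $x=0$, and the inversion relation transports the equality through the substitution $q\mapsto q^{-1}$ without issue.

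Existence is the easy direction. The family of Definition~\ref{def:xWeb} satisfies (i) by the remark following Proposition~\ref{prop:xWeb_presentation} (which uses $\delta_n=q^{2n-2}$). It satisfies (ii) and (iii), with $\alpha=0$ and $\epsilon=-1$, by \cref{lem:qrat_sunknot} and \eqref{shift_equivariance}.

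The main obstacle I expect is not the induction but the first reduction: making precise that a functor $F_x$ is determined by $\tqn{x}$. This requires controlling the square-root ambiguity in $a$, which I handle via $\nu_x$ and \cref{lem:homogeneous}. It also requires arguing that "quotient of the coefficient ring" forces $F_x$ to be determined by the image of $a$. One also has to check that the two test values used to determine $\alpha$ and $\epsilon$ are legitimate, i.e.\ that (iii) is assumed with the same $\alpha,\epsilon$ for all $x$.
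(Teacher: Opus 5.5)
Your proposal is correct and has the same architecture as the paper's proof. Both reduce to the scalar $\tqn{x}$, pin down $\alpha$ and $\epsilon$, use that shifts and inversion generate the modular action to force $\tqn{x}=\qn{x}$, and then read off $a^2=q^2\delta_x$.

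The one genuine difference is how $\alpha$ and $\epsilon$ are fixed.
\begin{itemize}
\item The paper tests compatibility of (ii) and (iii) at $x=\pm\frac12$. From $\tqn{-\frac12}=q^{-2}\tqn{\frac12}-q^{-2}$ (the paper's display has a typo on the left-hand side) and $\tqn{-2}=-q^{-4}\tqn{2}$, it derives $q^{-2}+q^{-2+2\epsilon}=q^{\alpha-2}+q^{\alpha+4\epsilon}$ and solves this exponent equation.
\item You instead evaluate (iii) at the fixed points $x=\pm1$ of $x\mapsto 1/x$. There it collapses to $1=q^{\alpha}$ and $-q^{-2}=-q^{2\epsilon}$. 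This is shorter and needs only the integer values forced by (i) and (ii).
\end{itemize}

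Neither test establishes global consistency of the resulting relations. Both arguments therefore rely on the $q$-rationals of \cite{MGO-2020} satisfying \eqref{shift_equivariance} for existence. You state this explicitly, whereas the paper leaves it implicit. You also spell out the Euclidean induction and the square-root ambiguity in $a$, which the paper compresses into ``transitivity of the action'' and ``hence $a=q\nu_x\delta_x$''.
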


\begin{proof}
Because the action of $\PSL_2(\Z)$ on the rational line is transitive and generated by the relations $x \mapsto x+1$ and $x \mapsto 1/x$, the values of $\tqn{x}$ for all $x\in \Q$ are uniquely determined for each value of the relation (iii). We are going to show that there is actually one such valid relation. 

An application of the recursion formula (ii) gives:
\[
\widetilde{\left\lbrace\frac{1}{2}\right\rbrace}=q^{-2}\widetilde{\left\lbrace\frac{1}{2}\right\rbrace}-q^{-2}.
\]
Applying the expected relation to both sides of the above equation yields:
\[
q^{\alpha}\frac{1}{\tqn{-2}_{q^\epsilon}}=q^{\alpha-2}\frac{1}{\tqn{2}_{q^\epsilon}}-q^{-2}.
\]
Then one can use a repeated application of the recursion formula to check that:
\[
\tqn{-2}=-q^{-4}\tqn{2},
\]
and from there the relation becomes:
\[
q^{-2}\tqn{2}_{q^\epsilon}=q^{\alpha-2}+q^{\alpha+4\epsilon}.
\]
Replacing $\tqn{2}=1+q^2$, we are left with:
\[
q^{-2}+q^{-2+2\epsilon}=q^{\alpha-2}+q^{\alpha+4\epsilon}.
\]
If $\epsilon=1$, this equation does not have a solution. If $\epsilon=-1$, then this forces $\alpha=0$, so the relation (iii) is imposed to be
$$
\tqn{\frac{1}{x}} = \frac{1}{\tqn{x}_{q^{-1}}}.
$$
This means that the values $F_x\left(\frac{a^2-1}{q^2-1}\right)$ are determined by assumptions (i), (ii) and (iii), and must be $q$-rationals of \cite{MGO-2020}. This determines the specialization $a^2 = (q-1)\qn{x} + 1 = q^2\delta_x$, hence $a = q\nu_x\delta_x$.    
\end{proof}

\subsection{The web categories and related categories}

It turns out that the categories $\xWeb$ have interesting properties, that differ in general from the integral case. Here we review some of them.

\begin{lemma} \label{lem:nbounded}
Setting $\begin{tikzpicture}[anchorbase]
\draw [semithick,->] (0,0) -- (0,.5);
\node at (0,.7) {\vphantom{$k>n$}};
\node at (0,-.2) {$k>n$};
\end{tikzpicture}=0
$
in $\xWeb$ causes torsion in $\End_{\xWeb}(\emptyset)$ unless $x\in \N$ and $x\leq n$.
\end{lemma}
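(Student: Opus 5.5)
If the $k$-labelled identity strand is set to zero, then so is every closed web that factors through a $k$-labelled edge. In particular the $k$-labelled unknot vanishes. By \cref{eq:unknot} this gives
\[
u_k=q^{k(k-1)}\nu_x^k\qbin{k}{x}=0 .
\]
The plan is to show that, unless $x\in\N$ with $x\leq n$, this forces a nonzero element of $\Z(q)[\nu_x]$ to act as zero on $\End_{\xWeb}(\emptyset)$, which is precisely torsion. Before the quotient, $\End_{\xWeb}(\emptyset)$ is the free rank-one module $\Z(q)[\nu_x]$ spanned by the empty web; this is inherited from $\End_{\iWeb}(\emptyset)$ under the specialization of Definition~\ref{def:xWeb}. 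In the quotient, the empty web $1$ satisfies $u_k\cdot 1=0$. So it suffices to check that $u_k\neq 0$ in $\Z(q)[\nu_x]$ for at least one (in fact any) $k>n$.

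First, $q^{k(k-1)}$ is a unit, and $\nu_x$ is invertible because $\nu_x^2=\delta_x^{-1}$ and $\delta_x\neq 0$. So $u_k=0$ forces
\[
\qn{x}\qn{x-1}\cdots\qn{x-k+1}=0
\]
in $\Z(q)$, i.e.\ $\qn{x-j}=0$ for some $0\leq j\leq k-1$. The key input is therefore the following claim: for $y\in\Q$, the $q$-rational $\qn{y}$ vanishes as a rational fraction if and only if $y=0$.

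To prove the claim I would use the remark that $\qn{y}\to y$ as $q\to 1$. A rational fraction that is identically zero has limit $0$, so $y=0$. (One should check that the limit statement concerns the fraction itself and not a cancelled form; since $\qn{y}\in\Q(q)$ this is harmless.) Conversely $\qn{0}=0$. Hence $u_k=0$ exactly when $x\in\{0,1,\dots,k-1\}$.

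Now take $x\notin\N$ or $x>n$. If $x\notin\N$, then $u_{n+1}\neq0$, and $u_{n+1}\cdot 1=0$ gives torsion. If $x\in\N$ with $x>n$, take $k=n+1\leq x$; then $x\notin\{0,\dots,n\}$, so again $u_{n+1}\neq 0$ and we get torsion. For $x\in\N$ with $x\le n$ no torsion arises, by the integral theory (Definition~\ref{def:nWeb}), although the statement only requires the ``unless'' direction.

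The main obstacle is to ensure that $1\neq0$ survives in the quotient, which is what makes $u_{n+1}$ a genuine torsion element rather than killing everything. If the whole endomorphism ring collapsed to $0$, I would interpret that as degeneration, or equivalently note that then $1$ itself is annihilated by the nonzero scalar $u_{n+1}$. The intended reading of ``torsion'' in the statement is this annihilation by the nonzero scalar $u_{n+1}$, and the argument above delivers it.
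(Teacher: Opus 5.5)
Your proof is correct and follows the paper's approach. The paper also kills the $k$-labelled circle of \cref{eq:unknot} to force $\qbin{k}{x}=0$ and takes $q\to 1$ to reach $\frac{x(x-1)\cdots(x-k+1)}{k!}=0$; you instead split the product in $\Z(q)$ and apply the same classical limit to one factor $\qn{x-j}$, which is the same idea (and your worry about $1\neq 0$ is one the paper ignores too: over a field of rational fractions $u_{n+1}$ is a unit, so the quotient actually collapses and ``torsion'' must be read in that degenerate sense).
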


\begin{proof}
Consider the circle evaluation \cref{eq:unknot}. Killing a $k$-labeled circle implies that $\qbin{k}{x}=0$. Taking the $q\rightarrow 1$ limit implies that $\frac{x(x-1)\cdots (x-k+1)}{k!}=0$ which only holds if $x\in \N$ and $k>x$.
\end{proof}

\begin{remark}
The question of the existence of semi-simple quotients would be of major interest, in particular in connection to 3-manifold invariants.
\end{remark}

We now introduce an analog of the quantum Schur algebra, which plays a key role in controlling the representation theory of the quantum group $U_q(\mathfrak{gl}_n)$. Here we use a doubled version akin to the one from~\cite{QS2}.
Let us introduce the $x$-analogue of the doubled Schur algebra as follows:
\begin{definition}
Let $m_1,m_2\in \N$ and $N\in\Z$. We define the algebra $\Schur_2^{x}(m_1,m_2,N)$ as follows:
\[
\Schur_2(m_1,m_2,N)=\END_{\xWeb}\left(
\bigoplus_{\substack{(k_1,\dots,k_{m_1+m_2})\in \N^{m_1}\times (-\N)^{m_2} \\ \sum k_i=N}}
\begin{tikzpicture}[anchorbase]
\draw [semithick, ->] (0,0) -- (0,.5);
\node at (0,-.2) {\small $k_1$};
\node at (1,.25) {$\dots$};
\draw [semithick, ->] (2,0) -- (2,.5);
\node at (2,-.2) {\small $k_{m_1}$};
\draw [semithick, <-] (3,0) -- (3,.5);
\node at (3,-.2) {\small $-k_{m_1+1}$};
\node at (4,.25) {$\dots$};
\draw [semithick, <-] (5,0) -- (5,.5);
\node at (5,-.2) {\small $-k_{m_1+m_2}$};
\end{tikzpicture}
\right).
\]
\end{definition}
Contrarily to the situation for $x=n$, one can prove that this algebra admits no finite-dimensional quotient. We make a precise statement in the special case when $m_1=m_2=1$ and $N=0$.
\begin{proposition} \label{prop:finite_quotients}
Let $x\notin\N$. Then the algebra $\Schur_2^{x}(1,1,0)$ has no finite-dimensional torsion-free quotient. \end{proposition}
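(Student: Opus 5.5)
Consider the object $\uparrow_1\downarrow_1$, i.e. one upward $1$-strand next to one downward $1$-strand; this is a summand of the object defining $\Schur_2^x(1,1,0)$. In it sits the element $e$ given by a cap followed by a cup, namely the ``turnback'' $\cup\circ\cap$. The closure of $\cup\circ\cap$ is a $1$-labeled circle, so $e^2=u_1\,e$ with $u_1=\nu_x\qn{x}$ by \cref{eq:unknot}. More useful are the elements obtained by passing through thicker edges. For each $k\geq 1$, let $w_k\in\Schur_2^x(1,1,0)$ be the endomorphism of $\uparrow_{k}\downarrow_{k}$ given by the $k$-labeled cap followed by the $k$-labeled cup. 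We have $w_k^2=u_k w_k$ with $u_k=q^{k(k-1)}\nu_x^k\qbin{k}{x}$. Since $x\notin\N$, each $u_k$ is nonzero; indeed its $q\to1$ limit is $\binom{x}{k}\neq 0$, exactly as in the proof of \cref{lem:nbounded}. Hence $f_k:=u_k^{-1}w_k$ is an idempotent factoring through the monoidal unit, and its closure (trace) is $1$.

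The plan is to show that these idempotents are pairwise orthogonal, or at least linearly independent, in every torsion-free quotient. Consider a quotient $\pi:\Schur_2^x(1,1,0)\to A$ that is torsion-free over $\Z(q)[\nu_x]$, with $\pi$ not the zero map. Compute the products $f_kf_l$ for $k\neq l$. These factor through $\emptyset$, and their middle part is a cup of label $k$ composed with a cap of label $l$ on an object with mismatched labels. Composing with a cap and cup has to be done inside the summand decomposition, since the algebra is an $\END$ of a direct sum. The cleaner route is therefore to take the traces $\mathrm{tr}(f_k)$ on the distinct summands $\uparrow_k\downarrow_k$: these are orthogonal idempotents living on different summands, so $f_kf_l=0$ for $k\neq l$ automatically.

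Nondegeneracy then reduces to one claim: in a torsion-free quotient, $\pi(f_k)=0$ forces $\pi=0$. For this, use that $f_k$ factors through $\emptyset$. If $\pi(f_k)=0$, then sandwiching with the cap and cup gives $\pi(u_k\cdot \mathrm{id}_\emptyset\text{-related scalars})$. More concretely, $\cap_k\circ f_k\circ\cup_k=u_k$ times the closure, so one gets $u_k\cdot\pi(\text{something invertible})=0$. Torsion-freeness together with $u_k\neq0$ then kills the identity on that summand, and by the sideway digon relations (\cref{eq:sideway_digon}) this propagates to all summands, since $\mathrm{id}_{\uparrow_1\downarrow_1}$ factors through $\uparrow_{k}\downarrow_k$ via ladder webs with invertible coefficients $\qbin{k}{k+l}u_{k+l}/u_l$, again nonzero for $x\notin\N$. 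Thus the images $\pi(f_k)$, $k\ge1$, form infinitely many nonzero orthogonal idempotents, and $A$ is infinite-dimensional.

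The main obstacle is the propagation step. It requires showing that vanishing on one summand forces vanishing on all others, which depends on every coefficient appearing in the transfer webs being nonzero scalars. That holds exactly because no $\qbin{k}{x}$ vanishes when $x\notin\N$, and this is where the hypothesis is used.
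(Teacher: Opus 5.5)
There is a genuine gap, and it sits exactly in the propagation step you flagged. Your sandwich computation is correct: $\cap_k\circ f_k\circ\cup_k=u_k\,\mathrm{id}_\emptyset$, so $\pi(f_k)=0$ yields $u_k\,\pi(\mathrm{id}_\emptyset)=0$, hence $\pi(\mathrm{id}_\emptyset)=0$. But this kills the identity of the \emph{empty} summand, not of $\uparrow_k\downarrow_k$, and that vanishing cannot be propagated, because $\mathrm{id}_{\uparrow_1\downarrow_1}$ does not factor through $\emptyset$.

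To see this, let $J$ be the two-sided ideal generated by $\mathrm{id}_\emptyset$, i.e.\ sums of morphisms factoring through the empty object. The spaces $\mathrm{Hom}(\uparrow_1\downarrow_1,\emptyset)$ and $\mathrm{Hom}(\emptyset,\uparrow_1\downarrow_1)$ are spanned by the cap and the cup. Hence $J\cap\End(\uparrow_1\downarrow_1)$ is spanned by the turnback $\cup_1\circ\cap_1$, and it does not contain the identity, since $\End(\uparrow_1\downarrow_1)$ is free on $\mathrm{id}$ and the turnback. So $\Schur_2^x(1,1,0)/J$ is a nonzero quotient in which every $f_k$ vanishes; it is torsion-free in your sense at least whenever $\Z(q)[\nu_x]$ is a field. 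This quotient does not contradict the proposition, but it shows that your key claim ``$\pi(f_k)=0$ forces $\pi=0$'' is false. No nonvanishing of ladder or sideway-digon coefficients can rescue it, because the obstruction is structural. Note also that factoring $\mathrm{id}_{\uparrow_1\downarrow_1}$ through $\uparrow_k\downarrow_k$ only transports vanishing \emph{downward} from $\uparrow_k\downarrow_k$, never to $\uparrow_l\downarrow_l$ with $l>k$.

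The repair is to read torsion-freeness as the paper does (compare \cref{lem:nbounded}): the quotient must not create torsion in $\End(\emptyset)$, i.e.\ $\lambda\,\pi(\mathrm{id}_\emptyset)=0$ forces $\lambda=0$. With that reading, your sandwich step alone finishes the proof, and this is precisely the paper's argument. Suppose $\sum_k\alpha_k\pi(f_k)=0$. Multiply by $\pi(f_l)$; orthogonality holds simply because $f_k$ and $f_l$ are endomorphisms of different summands (your sentence about traces conflates scalars with idempotents). Then sandwich with $\cap_l$ and $\cup_l$, which lie in the algebra thanks to the $k=0$ summand. This gives $\alpha_l u_l\,\pi(\mathrm{id}_\emptyset)=0$, hence $\alpha_l=0$ since $u_l\neq0$ for $x\notin\N$. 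So the images of the $f_k$ are linearly independent, and no propagation is needed.

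If you insist instead on arbitrary nonzero quotients, the family $(f_k)$ is insufficient, as $J$ shows. You would need, for each $j$, idempotents on $\uparrow_k\downarrow_k$ ($k\ge j$) that survive modulo the ideal generated by the lower summands. That requires control of the ideal structure of $\xWeb$, which your proposal does not supply.
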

\begin{proof}
Consider, for any $k\geq 0$ (if $k=0$, set $\mu_0=1$):
\[
f_k=\begin{tikzpicture}[anchorbase]
\draw [semithick,->] (1,1) to [out=-90,in=-90] (0,1);
\draw [semithick,->] (0,0) to [out=90,in=90] (1,0);
\node at (0,-.2) {\small $k$};
\node at (1,1.2) {\small $k$};
\end{tikzpicture}.
\]
Then we have:
\[
f_kf_l=\delta_{k,l}\mu_k f_k.
\]
So setting $e_k=\frac{f_k}{\mu_k}$ (the denominator is non-zero, thanks to the argument in \cref{lem:nbounded}), we get that the $e_k$'s form a family of orthogonal idempotents. If there is a finite-dimensional quotient, in this quotient one will have:
\[
\exists (i_1,\cdots,i_n),\; \exists (\alpha_{i_1},\cdots,\alpha_{i_n}),\quad \sum_j \alpha_{i_j}\bar{e_{i_j}}=0,
\]
where the bar denotes the projection to the quotient. Now, multiplying this identity with one $\bar{e_{i_j}}$ implies that $\alpha_{i_j}\bar{e_{i_j}}=0$.

Now, one can pre-compose this identity with an $i_j$-labeled cup and post-compose it with a $i_j$-labeled cap to obtain in the quotient:
\[
\alpha_{i_j}
\frac{1}{\mu_{i_j}}\; 
\begin{tikzpicture}[anchorbase]
\draw [semithick,->] (1,0) arc (0:-360:.5);
\draw [semithick,->] (1,-1.5) arc (0:-360:.5);
\node at (1.2,0) {\small $i_j$};
\node at (1.2,-1.5) {\small $i_j$};
\end{tikzpicture}\;=\; \alpha_{i_j}\mu_k=0.
\]
It follows that $\alpha_{i_j}\mu_k=0$ and thus all $\alpha_{i_j}$'s are zero. So no non-trivial combination of the $e_i$'s can be zero, and thus any quotient algebra can only be infinite-dimensional.
\end{proof}

This raises the question of the representation theory behing this web category. It is quite likely that Verma modules should play some role: compare to~\cite[Figure 2]{QS2}.

\section{Behavior of the $x$-polynomial}
\label{sec:derivation}

Quantum rationals are known not be continuous functions in the variable $x$. However, \cite{BBL} show that they are right-continuous for $0<q<1$, see also Proposition 4.6 of \cite{Etingof2025}.

It is very tempting to try to extract information by studying the behavior of the polynomial associated to a knot as $x$ varies, for example in the real interval $[0,1]$. Figure~\ref{fig:plots} presents evaluations of the polynomials for the unknot, the trefoil and the knot $5_1$ at $q=2$, for a sample of values for $x$ in $[0,1]$.

\begin{figure}[H]
    \centering
    \includegraphics[width=5cm]{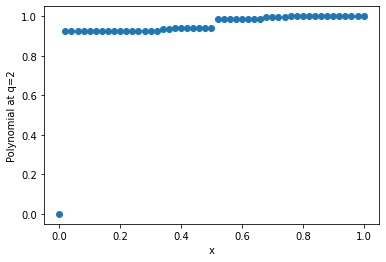} \includegraphics[width=5cm]{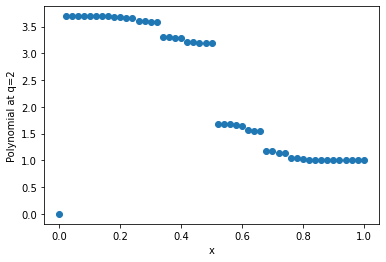} \\
    \includegraphics[width=5cm]{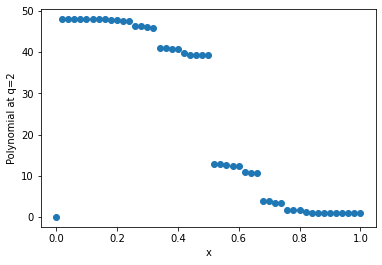} \includegraphics[width=5cm]{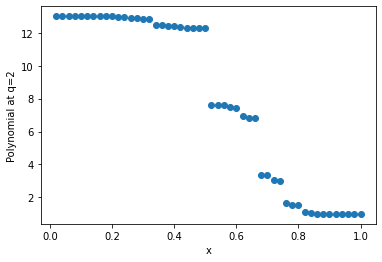}
    \caption{\textbf{Value of the $x$-invariant for $x\in [0,1]$ and $q=2$.} Top left: stabilized unknot. Top right: trefoil. Bottom left: knot $5_1$. Bottom right: ratio between the polynomials for $5_1$ and the trefoil. The fact that there still are discontinuities shows that they are not only controlled by a global factor, which is an incentive to further study these invariants.}
    \label{fig:plots}
\end{figure}

\section{Left $q$-rationals} \label{sec:left}

Another version of $q$-rationals has appeared in work by Bapat, Becker and Licata \cite{BBL}, denoted $\qn{x}^{\flat}$. Those enjoy the same recursive relations, but specialize to different values for $x=n\in \N$. It is quite natural to ask if these deformations could also be used to specialize the HOMFLY-PT polynomial and define web categories.

Let us introduce:
\[
\delta_x^\flat=\qn{x}^\flat-\qn{x-1}^\flat,
\]
and define the quadratic extension $\Z[(q)][\nu_x^\flat]/((\nu_x^\flat)^2=(\delta_x^\flat)^{-1})$.

\begin{definition}
One can define the web categories $\xWeb^\flat$ as the $a=q\nu_x^{\flat}\delta_x^\flat$ specializations of $\iWeb$.
\end{definition}

\begin{remark}
A key point to make is that in this case, one still uses the usual quantum coefficients for upward webs, while mixed relations will use the left versions. Indeed, the upward webs are controlled by the Hecke algebra, which ensures for example that the Reidemeister 2 move holds. Investigating versions of the Hecke algebra based on left quantum integers would also be an interesting problem, which we have not pursued here.
\end{remark}

The $x=n\in \N$ case is of particular interest, and yields invariants $[\cdot]_x^{\flat}$ that are genuinely different from the Reshetikhin-Turaev $U_q(\mathfrak{gl}_n)$ invariants. In particular, a computer search shows the following.

\begin{theorem}
The invariant $[\cdot]_2^\flat$ distinguishes all knots with 10 crossings or less, but the following ones (below, $\#$ stands for mirror image):
\begin{itemize}
    \item $10_{25}$ and $10_{56}$, as well as their mirrors;
    \item $10_{40}^{\#}$ and $10_{103}$, as well as their mirrors;
    \item $8_{8}^{\#}$ and $10_{129}$, as well as their mirrors;
    \item $5_1$ and $10_{132}$, as well as their mirrors;
    \item $8_{16}$ and $10_{156}^{\#}$, as well as their mirrors;
    \item $9_{42}$ and its mirror;
    \item $10_{48}$ and its mirror;
    \item $10_{71}$ and its mirror;
    \item $10_{91}$ and its mirror;
    \item $10_{104}$ and its mirror;
    \item $10_{125}$ and its mirror.
\end{itemize}
\end{theorem}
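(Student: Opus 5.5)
This statement is the result of a computer search, so the proof is a certified finite computation organized so that every step can be checked. Since $\xWeb^\flat$ is by definition the $a=q\nu_2^\flat\delta_2^\flat$ specialization of $\iWeb$, the invariant $[K]_2^\flat$ is obtained from the HOMFLY-PT polynomial $[K]_a$ by substituting $a=q\nu_2^\flat\delta_2^\flat$. So the first step is to compute $\qn{2}^\flat$ and $\qn{1}^\flat$ from \cite{BBL}, hence $\delta_2^\flat$. This gives an explicit element of $\Q(q)$, which is not a power of $q$ (otherwise the statement would collapse to the Jones polynomial). By \cref{lem:homogeneous}, every invariant is homogeneous in $\nu_2^\flat$. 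After normalizing by the writhe as in \cref{rem:normalizedinvariant}, and dividing out the appropriate power of $\nu_2^\flat$, each knot therefore gives a well-defined element of $\Q(q)$ or of $\nu_2^\flat\Q(q)$. This reduces the comparison to equality of rational fractions, which a computer can decide exactly.

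Next, I would take the HOMFLY-PT polynomials of all prime knots with at most 10 crossings and their mirrors from a standard table, after converting them to the framing and normalization of Figure~\ref{fig:HOMFLY-PT}. Mirror images are obtained by the substitution $a\mapsto a^{-1}$, $q\mapsto q^{-1}$ on the HOMFLY side. This must be done \emph{before} specializing, since the remark above shows the specialized invariant is not symmetric under $q\leftrightarrow q^{-1}$. I would then specialize each entry and compare all pairs by reducing numerator times denominator differences to zero. As an independent check, I would recompute a sample of knots with the web code directly in $\xWeb^\flat$.

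The list of exceptions then splits into two types. Pairs with identical HOMFLY-PT polynomials (such as $5_1$ and $10_{132}$, or $10_{25}$ and $10_{56}$) necessarily coincide, since the invariant factors through $[K]_a$. For pairs like $8_8^\#$ and $10_{129}$, one must check that the HOMFLY-PT polynomials already agree, or else that their difference vanishes under the specific substitution. Knots listed together with their mirror, such as $9_{42}$, are exactly those whose HOMFLY-PT polynomial is invariant under $a\mapsto a^{-1}$, $q\mapsto q^{-1}$, or becomes invariant after specialization.

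I expect the main obstacle to be normalization bookkeeping rather than mathematics. Tables use varying conventions for $a$ versus $a^{-1}$, for $q$ versus $q^{2}$, for mirror labels (the $\#$ conventions), and for framing. A mismatch would spuriously identify or separate knots. I would fix the conventions by checking the unknot, the trefoil (against the Example above, specialized), and the listed coincidence $5_1\sim 10_{132}$, and only then run the full comparison.
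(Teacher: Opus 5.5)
Your proposal is correct, but it takes a different route from the paper.

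\textbf{What the paper does.} It evaluates $[\cdot]_2^\flat$ directly in the web category. Braid words from Stoimenow's table are fed into the web code, which was written before the authors realized the invariant is a HOMFLY-PT specialization. Only afterwards, in the Corollary, does the paper check that the exceptional pairs share a HOMFLY-PT polynomial.

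\textbf{What you do.} You start from tabulated HOMFLY-PT polynomials and substitute $a=q\nu_2^\flat\delta_2^\flat$. This is legitimate by the very definition of $\xWeb^\flat$.

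\textbf{Comparison.}
\begin{itemize}
\item Your route is computationally much lighter: there are no web reductions, only exact arithmetic with rational fractions.
\item It explains, rather than merely observes, why the HOMFLY-PT-equivalent pairs must appear, so the Corollary falls out of the same computation. The content of the theorem becomes the statement that the substitution $a^2=q^2\delta_2^\flat$ creates no coincidences beyond those of HOMFLY-PT itself.
\item Its weakness is that everything hinges on translating external table conventions (variables, mirror labels, framing), as you rightly point out. The paper's direct computation is independent of those conventions and also serves as a test of the web code.
\end{itemize}

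\textbf{A step you must add.} The paper explicitly discards amphichiral knots using Stoimenow's list, and you need to do the same. You say the knots listed together with their mirrors are ``exactly those whose HOMFLY-PT polynomial is invariant under $a\mapsto a^{-1}$, $q\mapsto q^{-1}$.'' That is only true among chiral knots. Knots such as $4_1$, $6_3$ and $8_3$ also have symmetric HOMFLY-PT polynomials, yet they are absent from the list. The reason is that $K=K^{\#}$ for them, so there is nothing to distinguish. Without this filtering, your comparison of all knots against all mirrors would report spurious exceptions.
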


\begin{proof}
This list has been obtained by computer-assisted computations, using Stoimenow's list of knots and their braid expression as input~\cite{Stoimenow}. Amphichiral knots were filtered out using Stoimenow's list of amphichiral knots~\cite{Stoimenow}.
\end{proof}

\begin{corollary}
On knots with 10 crossings or less, this invariant is as powerful as the HOMFLY-PT polynomial.
\end{corollary}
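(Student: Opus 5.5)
Two comparisons underlie the corollary: $[\cdot]_2^\flat$ against the HOMFLY-PT polynomial, and against the knot classification itself. First, $[\cdot]_2^\flat$ is by definition the $a=q\nu_2^\flat\delta_2^\flat$ specialization of $[\cdot]_a$, just as $[\cdot]_x$ is obtained in Definition~\ref{def:xInv}. Hence, after normalizing by the writhe as in Remark~\ref{rem:normalizedinvariant} with $\nu_2^\flat$ in place of $\nu_x$, any two knots with equal HOMFLY-PT polynomials have equal $[\cdot]_2^\flat$. So $[\cdot]_2^\flat$ is at most as strong as HOMFLY-PT on any family of knots. It remains to show the reverse: every pair of knots with at most $10$ crossings that HOMFLY-PT separates is also separated by $[\cdot]_2^\flat$.

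For this I will go through the exceptional list of the preceding theorem item by item. For each mirror pair listed alone ($9_{42}$, $10_{48}$, $10_{71}$, $10_{91}$, $10_{104}$, $10_{125}$), I will check that the HOMFLY-PT polynomial is invariant under $a\mapsto a^{-1}$ (equivalently $q\leftrightarrow q^{-1}$ in the standard normalization), so HOMFLY-PT does not separate the knot from its mirror. These are exactly the classical chiral knots whose HOMFLY-PT polynomial fails to detect chirality, and the claim can be read off standard tables such as KnotInfo. For each pair of distinct knots listed ($10_{25}/10_{56}$, $10_{40}^\#/10_{103}$, $8_8^\#/10_{129}$, $5_1/10_{132}$, $8_{16}/10_{156}^\#$, with mirrors), I will check that their HOMFLY-PT polynomials coincide. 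These are again the known HOMFLY-PT coincidences up to $10$ crossings, with the mirror orientation conventions matched.

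The main obstacle is making sure the mirror and orientation conventions in the list agree with those used in the HOMFLY-PT tables, for instance $10_{40}^\#$ versus $10_{40}$, and that no HOMFLY-PT collision has been missed. I would handle this by recomputing $[K]_a$ with the same code and the same braid inputs from Stoimenow's list that were used for the theorem. Then I would compare the partition of knots induced by $[\cdot]_a$ with the partition induced by $[\cdot]_2^\flat$. Since the first partition refines the second by specialization, it suffices to check that the two partitions have the same number of classes.
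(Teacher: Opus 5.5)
Your proposal is correct and is essentially the paper's argument: the paper's proof is exactly your item-by-item check that each pair in the exceptional list of the preceding theorem shares its HOMFLY-PT polynomial, and it leaves implicit your first step, that $[\cdot]_2^\flat$, being the $a=q\nu_2^\flat\delta_2^\flat$ specialization of $[\cdot]_a$, can never separate knots with equal HOMFLY-PT polynomial. Your partition-counting cross-check is a sound extra safeguard, and your one slip is harmless: mirroring acts on $[\cdot]_a$ by $(a,q)\mapsto(a^{-1},q^{-1})$, so the parenthetical ``equivalently $q\leftrightarrow q^{-1}$'' is inaccurate, but the check you actually need, namely that $K$ and its mirror have the same HOMFLY-PT polynomial, is unaffected.
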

\begin{proof}
One can check that the pairs above do share the same HOMFLY-PT polynomial.
\end{proof}
The statement above contrasts with the Jones polynomial, that cannot tell apart knots $10_{22}$ and $10_{25}$ for example.

\bibliographystyle{amsalpha}

\newcommand{\etalchar}[1]{$^{#1}$}
\providecommand{\bysame}{\leavevmode\hbox to3em{\hrulefill}\thinspace}
\providecommand{\MR}{\relax\ifhmode\unskip\space\fi MR }
\providecommand{\MRhref}[2]{%
  \href{http://www.ams.org/mathscinet-getitem?mr=#1}{#2}
}
\providecommand{\href}[2]{#2}

\end{document}